\newtheorem{thm}{Theorem}[section]
\newtheorem{cnj}[thm]{Conjecture}
\newtheorem{lem}[thm]{Lemma}
\newtheorem{cor}[thm]{Corollary}
\newtheorem{prop}[thm]{Proposition}
\theoremstyle{definition}
\newtheorem{dfn}[thm]{Definition}
\newtheorem{prb}[thm]{Problem}
\theoremstyle{remark}
\newcommand{\R}{\mathbb{R}}
\newcommand{\N}{\mathbb{N}}
\newcommand{\bb}[1]{\mathbb{{#1}}}
\newcommand{\inv}{^{-1}}
\newcommand{\ip}[1]{\bigl\langle {#1} \bigr\rangle} 
\newcommand{\sm}{\smallsetminus}
\newcommand{\dom}{\operatorname{dom}}
\newcommand{\aut}{\operatorname{Aut}}
\newcommand{\res}{\upharpoonright}
\renewcommand{\deg}{\operatorname{deg}}
\newcommand{\br}{\operatorname{br}}
\renewcommand{\bf}{\mathbf}
\newcommand{\casc}{\operatorname{casc}}
\newcommand{\cay}{\operatorname{Cay}}
\newcommand{\rd}{\operatorname{red}}
\title{Factor of iid colorings of trees}
\author{Riley Thornton}
\address{CMU Math Department \\ 5000 Forbes Ave \\ Pittsburgh PA, 15232}
\email{rthornto@andrew.cmu.edu}
\begin{document}

\maketitle
\begin{abstract}
    We show that, for every $\epsilon>0$, the 4-regular tree has an fiid 4-coloring where a given vertex is assigned the 4th color with probability at most $\epsilon$. We also construct 5-colorings of $T_6$ improving known bounds on the measurable and approximate chromatic number of $F_3$.
\end{abstract}

\section{Introduction}
In this note, we'll adapt an argument of Achlioptas and Moore \cite{AM} to make progress on a long-standing problem: what is the measurable chromatic number of the Bernoulli shift of $F_2$? (See, for example, the survey by Kechris and Marks \cite{KM} for more on this problem.) In particular, we compute the approximate chromatic number of $F_2$. A well known equivalence between measurable labelling problems and factor of iid processes tells us that answering this question amounts to finding the least $k$ for which $T_4$ admit an (approximate) $F_2$-fiid $k$-coloring. We in fact prove:
\begin{thm} \label{thm:mainthm}
    For any $\epsilon>0$, there is an $\aut(T_4)$-fiid $4$-coloring of $T_4$, $\bf f$, so that for any vertex $v$ 
    \[\bb P\bigl(\bf f (v)=3 \bigr)<\epsilon.\]
\end{thm}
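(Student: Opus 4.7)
The plan is to reduce the construction of $\bf{f}$ to producing an fiid approximate $3$-coloring of $T_4$ with arbitrarily small defect density, and then to repair the defects by using the fourth color sparingly; adapting the Achlioptas--Moore local algorithm is the key to executing the first step. Concretely, I would attach iid Uniform$[0,1]$ labels $(X_v)_{v \in V(T_4)}$ and produce, for every $\delta > 0$, an $\aut(T_4)$-equivariant measurable map $\bf{g} \colon V(T_4) \to \{0, 1, 2\}$ whose expected density of monochromatic edges is at most $\delta$ and for which the random subgraph $F \subseteq E(T_4)$ of monochromatic edges has only finite connected components almost surely. The idea is to mimic an Achlioptas--Moore local algorithm: on a large ball $B_R(v)$, run a finite computation on the iid labels (for instance a finite-depth belief-propagation-style recursion or a local majority-style optimization), and let $\bf{g}(v)$ be its output; the radius $R = R(\delta)$ is tuned using Achlioptas and Moore's estimates on near-optimal colorings of sparse random graphs.

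Given such a $\bf{g}$, I partition $V(T_4)$ into the connected components $\mathcal{C}$ of the graph $(V(T_4), F)$. On each component $C \in \mathcal{C}$, all vertices share a common $\bf{g}$-color $c = c(C) \in \{0,1,2\}$, and $T_4[C]$ is a finite subtree. For each $v \in C$, every outside neighbor $u \in N(v) \setminus C$ satisfies $\bf{g}(u) \neq c$ (otherwise the edge $(u,v)$ would lie in $F$), so the list $L(v) = \{0, 1, 2, 3\} \setminus \{\bf{g}(u) : u \in N(v) \setminus C\}$ always contains $\{c, 3\}$ and has size at least $2$. Since finite trees are $2$-choosable, there is a proper list-coloring of $T_4[C]$ from these lists; I select one via a greedy BFS rooted at the iid-minimum vertex of $C$, preferring colors in $\{0, 1, 2\}$ and falling back to color $3$ only when forced. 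The resulting $\bf{f}$ is a proper fiid $4$-coloring of $T_4$.

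A bookkeeping step then bounds the density of color-$3$ vertices by $O(\delta)$. Color $3$ is assigned to $v$ only when $L(v) \cap \{0,1,2\}$ reduces to the singleton $\{c\}$ and the BFS-parent of $v$ already carries color $c$; the former requires both non-$c$ colors to appear among $v$'s outside neighbors, which in particular forces $v$ to lie in a nontrivial component of $F$. A union bound then gives $\bb{P}[\bf{f}(v) = 3] \le \bb{P}[v \text{ is incident to some edge of } F] \le 4\delta$, and taking $\delta < \epsilon / 4$ completes the argument.

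The main technical obstacle is the first step. Standard fiid devices (Bernoulli-based random partitions or bounded-depth local majority votes) cannot drive the monochromatic-edge density to zero, and $T_4$ is not known to admit an fiid proper $3$-coloring. The whole content of the theorem lies in the adaptation of the Achlioptas--Moore analysis: via the Benjamini--Schramm correspondence between fiid processes on $T_4$ and local algorithms on random $4$-regular graphs, one translates their second-moment / rigidity analysis of near-optimal colorings into an fiid construction of $\bf{g}$ on the tree, with the added requirement that the induced monochromatic clusters stay finite so that the list-coloring repair above goes through.
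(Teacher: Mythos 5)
Your high-level architecture — produce an fiid approximate $3$-coloring with small defect and finite defective clusters, then repair using the fourth color — is the same two-phase strategy the paper uses. But there are two problems with how you execute the repair step and with what you claim the first step provides.

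\textbf{The repair step as written can produce an improper coloring.} You partition $V(T_4)$ into connected components $\mathcal C$ of the monochromatic-edge graph $(V,F)$ and list-color each component $T_4[C]$ from lists
\[
L(v)=\{0,1,2,3\}\sm\{\bf g(u): u\in N(v)\sm C\},
\]
which record the \emph{old} $\bf g$-colors of outside neighbors. Two distinct non-singleton components $C\neq C'$ can be adjacent in $T_4$: take $v\sim v'$ with $\bf g(v)=\bf g(v')=0$ (so $\{v,v'\}\subseteq C$) and $v'\sim v''\sim v'''$ with $\bf g(v'')=\bf g(v''')=1$ (so $\{v'',v'''\}\subseteq C'$). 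Now $v'\in C$ and $v''\in C'$ are adjacent, $L(v')$ excludes $\bf g(v'')=1$ and $L(v'')$ excludes $\bf g(v')=0$, but nothing prevents both from being recolored to $3$. If $L(v')=\{0,3\}$, $L(v'')=\{1,3\}$, and the BFS parents of $v'$ and $v''$ in their respective components happen to keep colors $0$ and $1$, the greedy rule forces $\bf f(v')=\bf f(v'')=3$, a monochromatic edge. The paper avoids exactly this by erasing the colors on the entire $1$-neighborhood $B_1(\bf f\inv(\rd))$ of the bad vertices, so that the resulting uncolored components are maximal (no two distinct uncolored components are adjacent), the surviving colored neighbors are genuinely unchanged, and the lists can safely be computed from those unchanged colors. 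Your components of $F$ are not maximal in this sense.

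\textbf{The first step hides the entire content of the theorem.} Saying ``run a finite-depth belief-propagation-style recursion or a local majority-style optimization on $B_R(v)$, tuned using Achlioptas--Moore's estimates'' does not produce an fiid $3$-labelling of $T_4$ with (i) defect density $\to 0$ and (ii) almost-surely finite monochromatic clusters. Achlioptas and Moore's result is proved via a sequential greedy list-coloring algorithm analyzed by the differential-equation method, not by a depth-$R$ local rule, and passing from their analysis on finite random regular graphs to an equivariant process on the infinite tree is precisely where the work lies. In particular, requirement (ii) is nontrivial and is what forces the paper to run a parallel version of the greedy process in $\epsilon$-density steps, track the cascade branching process, verify subcriticality of the remainder components via a numerically integrated ODE (Propositions~\ref{prop:branching}, \ref{prop:components}, \ref{prop:comp} and the Step and Trajectory Lemmas), and then add a buffer-cleaning stage (the ``modified coloring process'') so that the bad set has finite $B_1$-components. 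Simply ``taking $R=R(\delta)$ large'' does not address why the bad clusters should be finite, nor does the Benjamini--Schramm correspondence by itself manufacture the local algorithm.

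Your probability bookkeeping at the end (a union bound giving $\bb P(\bf f(v)=3)\le 4\delta$) is fine once a correct repair scheme is in place, and the use of $2$-choosability of finite trees with Luzin--Novikov selection matches the paper. But as written the proposal both contains a genuine bug (adjacent defective components) and defers the central construction to an unspecified adaptation.
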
 An approximate 2-coloring is impossible by strong ergodicity, so the approximate chromatic number of $ F_2$ is $3$. We also give an fiid $5$-coloring of $T_6$ which improves the known bounds on the measurable (not merely approximate) chromatic number of $F_3$. 
\begin{thm} \label{thm:submainthm}
    For any $\epsilon>0$ there is an $\aut(T_6)$-fiid $5$-coloring of $T_6$ so that a vertex gets color $4$ with probability at most $\epsilon$.
\end{thm}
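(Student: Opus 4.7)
The plan is to adapt the proof of Theorem~\ref{thm:mainthm} from the parameters $(d, k) = (4, 4)$ to $(6, 5)$, running the same Achlioptas--Moore--style construction on $T_6$ with a palette of four ``primary'' colors $\{0,1,2,3\}$ and one ``rare'' color $4$. Concretely, I would first specify an $\aut(T_6)$-equivariant fiid local rule that, using the iid labels in a bounded neighborhood of a vertex $v$, assigns $v$ a tentative color in $\{0,1,2,3\}$. A natural choice, modeled on the construction used for Theorem~\ref{thm:mainthm}, is to sample from a truncated cavity-style distribution associated with the Gibbs measure on proper 4-colorings of $T_6$. I would then define the ``defect'' set to be the vertices whose tentative color agrees with that of some neighbor, and repaint those vertices with color $4$. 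The resulting fiid 5-coloring is automatically proper, and the probability that a fixed vertex is painted with color $4$ equals the probability of a local conflict under the rule.

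Next, I would estimate this defect probability as a function of the local rule's parameters (truncation depth, entropy of the iid labels, and the bias of the underlying Gibbs distribution), and show that it can be driven below any prescribed $\epsilon > 0$. An ergodic averaging argument, analogous to the one finishing Theorem~\ref{thm:mainthm}, then converts an asymptotic bound into a uniform $\epsilon$-bound at every vertex of $T_6$, using $\aut(T_6)$-equivariance.

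The main obstacle is the numerical verification at $(d, k) = (6, 5)$. Since here $k < d$, in contrast to the balanced case $k = d$ of Theorem~\ref{thm:mainthm}, the local rule has strictly less room to avoid conflicts, so the contraction (or second-moment) inequality underlying the Achlioptas--Moore argument must be re-derived in this regime. A plausible fix is to use a biased Gibbs measure that favors balanced colorings of the local neighborhood, trading entropy for rigidity; whether this trade-off is enough to close the required inequality for $(6, 5)$ is the crux of the proof, and --- given that the theorem is stated --- it should work out, but I would expect the margin to be tighter than in the proof of Theorem~\ref{thm:mainthm}.
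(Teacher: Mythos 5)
Your high-level reading of the statement --- four ``primary'' colors plus one rare color $4$ --- is right, and the instinct to adapt the $T_4$ argument is also right. But two things go wrong.

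First, the step ``repaint all defect vertices with color $4$, and the result is automatically proper'' fails: two adjacent vertices can both be defects under the tentative rule, and after repainting they are both color $4$, so the result is not proper. This is exactly the obstruction the paper spends its ``tidying up'' section on. The paper does not get away with a single repaint; it erases all colors on $B_1\bigl(\bf f\inv(\rd)\bigr)$, proves that the resulting uncolored components are almost surely finite (Lemma 3.4 on red components, Lemma 3.3 on the $B_3$-neighborhoods, plus the buffer rounds inside each step of the modified coloring process), and then uses Luzin--Novikov plus $2$-list-colorability of trees to properly finish those finite components with the extra color available. Controlling the component structure of the defect set is the heart of the matter, and the proposal simply asserts propriety rather than establishing it.

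Second, the mechanism you invoke (``truncated cavity-style distribution associated with the Gibbs measure on proper $4$-colorings,'' with a ``contraction/second-moment inequality'') is not the mechanism of the paper and is not what the Achlioptas--Moore reference is about. The paper runs a greedy cascade-based partial coloring process, analyzed by tracking the distribution of vertex ``types'' $(d,c)$ via the differential equation method: nearsightedness (Lemma 2.5) and the step/trajectory lemmas (Lemmas 2.13 and 2.17) reduce the analysis to a numerically-checked ODE (Proposition 2.18) showing a multi-type branching process stays subcritical. There is no Gibbs sampling and no second-moment computation; the numerics verify subcriticality of a branching process, not a contraction estimate for a cavity recursion. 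So even setting aside the properness gap, the analytic engine you propose would have to be built from scratch rather than inherited from the paper's $T_4$ argument.

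Minor point: the ``balanced case $k=d$'' framing is off. In the paper's terms the relevant palette for $T_6$ is $p=4$ against regularity $r=6$, versus $p=3$ against $r=4$ for the main theorem; the $5$th color (or $4$th, respectively) never enters the cascade analysis at all, only the final Luzin--Novikov cleanup. So the relevant comparison is $3$-coloring a $4$-regular tree vs.\ $4$-coloring a $6$-regular tree, and both are ``imbalanced'' in your sense.
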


The method also yields $5$-colorings of $T_5$ which are approximate $4$-colorings, but this also follows from the theorem of above. Before beginning, we warn the reader that the proofs given here rely on machine computation, but our computation can be verified on most desktop computers.

\subsection{Acknowledgements} I would like to thank Andrew Marks and Clinton Conley for pointing out the literature on the differential equation method to me. I would like to thank Colin Jahel for a number of helpful conversations. And, I would like to thank MATLAB for verifying Proposition \ref{prop:comp}. This work was partially supported by NSF MSPRF grant DMS-2202827.

\subsection{Notation and conventions}

A graph is a pair $G=(V,E)$ where $E\subseteq V^2$ is symmetric and irreflexive. We write $u\sim v$ to mean that $(u,v)\in E$, and say that $u,v$ are neighbors or are adjacent. A connected graph induces a metric on its vertex set by counting the number of edges in the smallest path between two vertices. We write $B_n(v)$ to mean the closed ball of radius $n$ in this metric:
\[B_n(v)=\{u: d(u,v)\leq n\}.\] And, we write $N(v)$ for the set of neighbors of $v$:
\[N(v)=\{u: u\sim v\}.\] So, $N(v)=B_1(v)\sm\{v\}$. We write $T_d$ for the (unique-up-to-isomorphism) $d$-regular tree.

We will work with partial functions, $f:X\rightharpoonup Y$. Formally, we view a partial function on $X$ as a function $f:A\rightarrow Y$ with $A\subseteq X$, and we call $A$ the domain of $f$, or $\dom(f)$. When we write $f(x)=y$ for a partial function $x$, we mean to imply that $x\in \dom(f)$.

We reserve bold characters for random variables, and the domains of our random variables will all be product spaces of the form $([0,1]^V, \lambda^V)$ where $\lambda$ is Lebesgue measure and $G=(V, E)$ is a countable graph. Of course $([0,1],\lambda)$ is isomorphic to any standard probability space, such as $([0,1]^\N,\lambda^\N)$. So, in practice, we assume every vertex of our graph can independently sample any source of randomness we require infinitely often.

Usually, our random variables will take values in $A^V$ for some $A$ and will be $\aut(G)$ equivariant; this is what is meant by an fiid labelling of $G$. More generally, $\bf f$ is $\Gamma$-fiid if $\Gamma$ acts on both $V$ (and by extension $[0,1]^V$) and the codomain of $\bf f$, and for all $\gamma\in \Gamma$ \[\bf f(\gamma\cdot x)=\gamma \cdot \bf f(x).\] We suppress the dependent variable in our notation. So, we write $\bb P\bigl(\phi(\bf f)\bigr)$ for the probability that $\bf f$ has property $\phi$, e.g. for a vertex $v\in V$
\[\bb P\bigl(\bf f(v)=3\bigr)=\lambda^V\bigl(\bigl\{x:\bigl(\bf f(x)\bigr)(v)=3\bigr\}\bigr)\]

Similarly $\bb E\bigl(g(\bf f)\bigr)$ is the expected value of $g(\bf f)$, e.g.
\[\bb E\bigl(\# \bigl\{ u\in B_3(v):\bf  f(u)=3\bigr\}\bigr)=\int_x \# \bigl\{ u\in B_3(v): \bigl(\bf f(x)\bigr)(u)=3\bigr\}.\]

We are concerned with (partial) colorings of $T_n$. Say a (partial) coloring $f$ is proper at $v$ if $f(v)\not=f(u)$ for any neighbor $u$ of $v$. And, say $f$ is proper if it is total and proper at every vertex. We will allow (at first) our colorings to be improper at vertices in a small error set. We will reserve the color red for vertices where we have made an error. Write $k$ for the von Neumann numeral $\{0,...,k-1\}$ and set 
\[k^+=\{0,...,k-1,\rd\}.\] For a partial coloring $f$ and vertex $v$, say a color $i\in k$ is available to $v$ if $f(u)\not=i$ for any $u\sim v$. Otherwise, say $v$ sees $i$. Note, when we say a color is available to a vertex we always mean a color other than red. 

By an $n$-coloring we mean a proper coloring with $n$ colors. When we want to emphasize that a coloring may not be proper we call it a labelling. An fiid approximate $n$-coloring of $G=(V,E)$ is a sequence of fiid labellings $\ip{\bf f_i: i\in\omega}$ of $G$ so that $\bb P(\bf f_i(v)\in n),$ $\bb P(\bf f_i\mbox{ is proper at }v)$ both converge to $1$.

\subsection{A note about translation} This article is aimed primarily at descriptive set theorists, though it is couched in probabilistic language. In many cases there is an easy translation.

There is an exact correspondence between $F_n$-fiid labellings of $T_{2n}=\cay(F_n)$ and measurable labellings of the Schreier graphing of the $F_n$-Bernoulli shift (one can find other groups corresponding to $T_d$ in general). An fiid labelling \[\bf f:[0,1]^F_n\rightarrow k^{T_{2n}}\] corresponds to a measurable labelling via
\[f(x)=\bf f(x)(e)\] (where $e$ is the root of $T_{2n}$). The inverse is given by
\[\bf f(x)(\gamma)= f(\gamma\cdot x).\] (Or $f(\gamma\inv\cdot x)$ depending on your conventions for the shift action and the Cayley graph). So, for example $\bb P(\bf f(v)=7)=\mu(\{x: f(x)=7\}).$ More generally, the probability of some event is the measure of the vertices which see this event locally. 

This paper concerns not just $F_{n}$-fiid processes, but $\aut(T_d)$-fiid processes. If we set $R=\{\rho\in\aut(T_{2n}): \rho(e)=e)\}$ (again, $e$ is a root of $T_{2n}$), then $\bf f$ is $\aut(T_{2n})$-equivariant if and only if it is $R$-equivariant and $F_{n}$-equivariant. So, $\aut(T_{2n})$-fiid processes correspond to labellings of the Bernoulli shift with extra symmetry. In fact, since $R$ is compact, we can quotient out by these symmetries to get a pmp graph which models $\aut(T_{2n})$-factor maps.

I will point out the translations to descriptive set theoretic language in a couple places, but several important arguments here are simply more natural in the probabilistic language. 

From this point forward, by fiid we mean $\aut(T_n)$-fiid.

\section{Approximate colorings}

We'll first show that $T_4$ has an fiid approximate 3-coloring. That is, we will show that there is an fiid labelling of $T_4$ so that a given vertex gets colored $\{0,1,2\}$ with probability $(1-\epsilon)$ and the labelling is proper at a given vertex with probability $(1-\epsilon)$. And similarly, $T_6$ has an fiid approximate $4$-coloring. In later sections we will tidy up these colorings to get fiid proper $4$- and $5$-colorings with one color class as small as we like.

\subsection{Overview}

 It will be helpful to have an outline of the argument in mind. For this discussion we will analyze an attempt to 3-color $T_4$ (though the same discussion applies almost verbatim to other cases). The algorithm runs in two phases as follows: In each step of the first phase, we greedily attempt to color some small fraction (say $\epsilon$) of the remaining vertices so that every vertex remaining has at least 2 colors available to it. We activate each vertex iid at random with probability $\epsilon$ (weighted according to some local data) and color the active vertices. Coloring a vertex may rob some neighbor and reduce its list of available colors to one, so we are forced to color that neighbor. Coloring this new vertex will rob others, and so on. We chase this cascade to its end and hope no cascade grows too large. Of course, two cascades may crash into each other and ruin our coloring. For now, color the vertices where two clashes meet red (the color of error). Provided the probability of seeing a cascade of size $n$ decays exponentially in $n$, these clashes occur with probability $O(\epsilon^2)$. Since in each step we color a vertex red much less often than we color it any color, we will end up with an approximate coloring. The first phase ends when the expected size of a component in the remainder graph is finite. Then in the second phase we simply choose a coloring on each component (using, for example, the Luzin--Novikov measurable uniformization theorem). Such a coloring is available because trees are 2-list-colorable. In the end we may have some adjacent red vertices; we will see how to tidy these up in a later section.

 For this algorithm to work, we need that the sizes of cascades stay small (i.e. decay exponentially) until the components of the remainder become finite. In the following subsection we will prove a handful of lemmas telling us the cascades and the components in the remainder behave like branching processes, so we can compute their expected sizes rather easily; they only depend on the probability that a vertex in the uncolored remainder sees a given (multi)set of colors. We can also compute the change in these probabilities in each step of the first phase. It turns out the probabilities satisfy (up to small error) a difference equation which looks like an Euler approximation (with step size $\epsilon$) to a certain differential equation. Using numerical computation software we can check that, taking $\epsilon$ small enough, the branching process describing the cascades stays subcritical until the branching process describing components of the remainder becomes subcritical. We derive these differential equations and report the computation in Section \ref{sec:approx}.

\subsection{Definition and some lemmas}

In this section, we formally describing the first phase of our algorithm and prove some important lemmas about its behaviour. Throughout we will think of the vertices colored red as forming a kind of error set. Our set up will be phrased in somewhat clunky generality; we attempt to $p$-color $T_r$ ($p$ for palette, $r$ for regularity).

\begin{dfn} \label{dfn:greedycolor}
    If $\bf f$ is a random partial coloring of $T_r$, we call the (random) set of uncolored vertices the \textbf{remainder graph} of $\bf f$ and write $R_{\bf f}$. We say that vertices in this set \textbf{remain}.

    Fix $\epsilon>0$ (which will be a step size) and tuning parameters\footnote{To keep cascades small, we'll prefer to activate vertices with few uncolored neighbors} $p_{(d,c)}<1$ for $(d,c)$ in the set 
    \[T=\{(d,c): 0\leq d\leq r, 2\leq c\leq p\}\] ($c$ will be the number of colors available to a vertex and $d$ will be the number of uncolored neighbors $v$ has). Then, the \textbf{greedy coloring process} for $T_r$ with these parameters is the sequence of fiid partial $p^+$-colorings of $T_r$, $\ip{\bf{f}_i:i\in\N}$, defined inductively:
    \[\mathbf{f}_0=\emptyset\]
    and $\mathbf{f}_{i+1}$ is gotten from $\bf f_i$ as follows: first consider a set $A_i$ of ``active" vertices, with each $v\in A$ with probability $\epsilon p_{(d,c)}$ conditionally independently given that $v$ remains, has $d$ uncolored neighbors, and has $c$ colors from $p$ available. Iteratively apply the following rules to each vertex $v$ simultaneously in rounds
    \begin{enumerate}
        \item if $v$ is in $A_i$, color $v$ by a random color available to $v$
        \item if $v$ has $2$ colors available, say $j$ and $k$ and one of $v$'s neighbors is colored $k$, then color $v$ with $j$.
        \item if two or more of $v$'s neighbors have been colored in this step (possibly in different rounds), color $v$ red
        \item if $2$ neighbors $v,u$ would be colored simultaneously, then color them both red.
    \end{enumerate}
    Then, $\bf f_{i+1}$ is the coloring we have after applying the above rules in $\omega$ rounds.
\end{dfn}

Note that, if $\bf f_i(u)$ is not red, then $\bf f_i$ is proper at $u$, meaning that no neighbor of $u$ has the same color as $u$. And for any $i$, any remaining vertex in $\bf f_i$ has at least two colors available from $p$.

It will also be useful to analyze this coloring process starting from a single vertex in isolation:

\begin{dfn}
    Suppose $\bf f$ is a partial $p^+$-labelling of $T_4$ with the property that every remaining vertex sees at most one color from $p$. For $v$ a vertex of $T_r$, the cascade at $v$ is the random partial coloring defined inductively as follows:
    \begin{itemize}
    \item If $v$ is colored by $\mathbf{f}$, then $\casc_0(v,\bf{f})$ is empty. Otherwise, $\casc_0(v,\bf f)$ colors $v$ with a random color available to it (chosen uniformly at random, independent from the rest of the process)
    \item If $u$ is adjacent to a vertex colored by $\casc_i(v,\bf{f})$, and $u$ is not colored by $\bf f$, then $\casc_{i+1}(v,\bf{f})$ colors $u$ if $u$ now has only one color from $p$ available, and $u$ gets whichever color is available.
    \end{itemize}
    And $\casc(v,\bf f)=\bigcup_i \casc_i(v,\bf f).$ Note that $|\casc(v,\bf f)|=|\dom(\casc(v,\bf f)|$. We'll write $\casc(v, i)$ for $\casc(v, \bf f_i)$.
\end{dfn}

Note that if $v$ is active in step $i$ of the greedy coloring process, then every vertex colored by $\casc(v, \bf f_i)$ is colored by $\bf f_{i+1}$ or separated from $v$ by a vertex colored red by $\bf f_{i+1}$. Conversely, everything colored by $\bf f_{i+1}$ is either colored by the cascade at some active vertex in $\bf f_i$ or is colored red.

In the first phase, the algorithm has the feature that colors only spread outward from active vertices. Labellings coming from such operations have the useful feature that the behavior of the labelling is independent on different branches of the remainder.

\begin{dfn}
Given $u,v$ neighbors in $T_r$, the \textbf{branch} of $u$ from $v$ is the component of $u$ after deleting $v$. We write
\[\br_v(u):=\mbox{ the connected component of }u\mbox{ in }T_r\sm \{v\}\]

A random partial coloring $\mathbf f$ of $T_r$ is \textbf{nearsighted} if $\bigl(\bf f\res \br_v(u)\bigr)$ and $\bigl(\bf f\res \br_u(v)\bigr)$ are conditionally independent given that $u,v$ are uncolored.
\end{dfn}
In the language of pmp graphs, if $\bf f$ is nearsighted then in the subgraph of vertices uncolored by $\bf f$ with normalized induced measure any sets defined by looking at different branches of the connected component of a vertex will be independent. The proof of the next theorem involves a non-equivariant labelling and does not have a very natural translation to the language of pmp graphs.

\begin{lem}
For any choice of parameters, and for each $i$, the greedy coloring process at step $i$, $\bf f_i$, is nearsighted.
\end{lem}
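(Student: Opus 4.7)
I would prove this by induction on $i$. The base case $\bf f_0 = \emptyset$ is immediate, since there is no coloring data to condition on. For the inductive step, fix adjacent $u, v$, and let $A = \br_v(u)$, $B = \br_u(v)$, so that $V = A \sqcup B$ with $(u, v)$ the unique edge between the two sides.

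The first step is to parameterize the transition from $\bf f_i$ to $\bf f_{i+1}$ by fresh iid vertex-labeled randomness $R = (R_A, R_B)$ encoding activation decisions and any color choices demanded by rules (1)--(4) of Definition \ref{dfn:greedycolor}. Then $R$ is a product measure independent of $\bf f_i$, so by the inductive hypothesis the pairs $(\bf f_i\res A, R_A)$ and $(\bf f_i\res B, R_B)$ are conditionally independent given that $u$ and $v$ remain uncolored in $\bf f_i$.

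The heart of the argument is a geometric observation: a cascade propagates only through vertices it actually colors, and since $(u,v)$ is the only edge between $A$ and $B$, a cascade originating at an active vertex of $A$ can reach $B$ only by first coloring $u$ and then $v$; symmetrically for $B$. Consequently, on the event $E$ that $u$ and $v$ are both uncolored in $\bf f_{i+1}$, no cascade crosses $(u,v)$. This yields two facts: $\bf f_{i+1}\res A$ is a function of $(\bf f_i\res A, R_A)$ alone (and analogously on $B$), and $E$ factors as $E_A \cap E_B$ with $E_A$ measurable in the $A$-side data and $E_B$ in the $B$-side data. Combining these with the conditional independence from the previous paragraph gives nearsightedness of $\bf f_{i+1}$.

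The main technical obstacle will be nailing down the ``cascades do not cross'' claim: one has to unwind rules (1)--(4) and verify that an uncolored vertex influences its neighbors only through the color it eventually receives, not through its intermediate list of available colors, so that the $A$-only and $B$-only dynamics genuinely agree with the joint dynamics on $E$. A small related point is that $u$'s activation probability depends on its local data across both sides, but once we condition on $v$ uncolored in $\bf f_i$ (already part of the inductive conditioning), this local data becomes a function of $\bf f_i\res A$ alone, so $X_u$ lives in the $A$-side $\sigma$-algebra as required.
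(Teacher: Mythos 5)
Your proof is correct and rests on the same key geometric observation that drives the paper's argument: on the event that both $u$ and $v$ remain uncolored, the greedy dynamics on $\br_v(u)$ and on $\br_u(v)$ decouple, because a cascade can only cross from one branch to the other by coloring $u$ and then $v$. Where the two arguments diverge is in how this observation is packaged. The paper sidesteps induction by introducing a single non-equivariant auxiliary process $\bf{\tilde f}$ in which rules $(1$--$4)$ are modified to ignore the edge $(u,v)$; the two branches of $\bf{\tilde f}$ are then manifestly independent for every $i$ at once, and the lemma follows from the coupling identity that $\bf{\tilde f}$ and $\bf f_i$ agree exactly on the event $\{u,v\in R_{\bf f_i}\}=\{u,v\in R_{\bf{\tilde f}}\}$. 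You instead run an induction on $i$, making the fresh step-randomness explicit as a product $R=(R_A,R_B)$, factoring the conditioning event as $E=E_A\cap E_B$ with each factor living in the corresponding branch $\sigma$-algebra, and then invoking conditional-independence algebra; this is a perfectly valid alternative organization. The "cascades do not cross" verification that you flag as the technical heart is precisely the coupling claim the paper asserts but does not spell out (a round-by-round check that the joint dynamics and the severed dynamics coincide when neither endpoint gets colored), so both routes ultimately turn on the same unwinding of Definition~\ref{dfn:greedycolor}. Your remark that $u$'s activation weight becomes $A$-measurable once one conditions on $v$ remaining in $\bf f_i$ is a detail the paper elides and is worth keeping.
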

\begin{proof}
For $u,v$ vertices of $T_r$, consider the modified (not equivariant) coloring $\bf{\tilde f}$ defined the same as $\bf f_i$ except that any time a cascade starting in $\br_v(u)$ reaches $u$ it stops, and likewise for $\br_u(v)$; that is, we amend Definition \ref{dfn:greedycolor} parts $(1-4)$ to only concern neighbors in the same branch.

Then, $u,v$ remain in $\bf f_i$ if and only if they remain in $\bf{\tilde f}$. And, given $u,v$ remain in either, $\bf f_i=\bf{\tilde f}$.

For brevity, we'll write $f_{\ell}=f\res \br_u(v)$ for a partial coloring $f$ of $T_4$. We call $f_\ell$ the left branch of $f$. Similarly, we write  $f_r=f\res \br_v(u)$ for the right branch of $f$.

 Since any events concerning $\bf{\tilde f}_\ell$ and $\bf{\tilde f}_r$ are independent, we can compute, for $A$ a property of left branches of colorings and $B$ a property of right branches:

\begin{align*}
\bb P\bigl(A\bigl((\bf f_i)_\ell\bigr) \wedge B\bigl((\bf f_i)_r\bigr)|\;u,v\in R_{\bf{f}_i}\bigr) & = \bb P\bigl(A\bigl(\bf{\tilde f}_\ell\bigr)\wedge B\bigl(\bf{\tilde f}_r\bigr)|\;u,v\in R_{\bf{\tilde f}}\bigr)  \\ 
\; & = \bb P\bigl(A\bigl(\bf{\tilde f}_\ell\bigr) |\;u,v\in R_{\bf{\tilde f}}\bigr)\bb P\bigl( B\bigl(\bf{\tilde f}_r\bigr)|\;u,v\in R_{\bf{\tilde f}}\bigr) \\
\; & = \bb P\bigl(A\bigl((\bf{f}_i)_\ell\bigr)|\;u,v\in R_{\bf{ f}_i}\bigr)\bb P\bigl( B\bigl((\bf{ f}_i)_r\bigr)|\;u,v\in R_{\bf{f}_i}\bigr) 
\end{align*}
Where in the second line we use that $u\in R_{\bf{\tilde f}}$ and $v\in R_{\bf{\tilde f}}$ are independent of each other and of events concerning the opposite branch.
\end{proof}

So, as we start coloring vertices in a cascade, the probability that coloring a vertex robs one of its neighbors of a color doesn't depend on the past of the cascade. The cascades behave like multi-type branching processes. The next few lemmas make this precise and give the parameters for the branching process. (See, for example, \cite[Chapter 5]{LP} for background on branching processes.)

\begin{dfn}
If $F$ is a partial labelling of $T_r$, we write $\deg_F(v)$ for the number of unlabelled neighbors of $v$, and $c_F(v)$ for the set of colors from $p$ available to $v$:
\[c_f(v)=p\sm \{F(u):u\sim v\}.\]

The \textbf{type} of an uncolored vertex $v$ in a partial coloring $F$ is the pair \[\tau_F(v):=\bigl(\deg_F(v), |c_F(v)|\bigr).\] We suppress $F$ when it is clear from context, and we write $\tau_i(v)$ for $\tau_{\bf f_i}(v)$.

Let us stress that types are only defined for uncolored vertices: $v\in \dom(\tau_i)$ implies $v\in R_{\bf f_i}$.  
\end{dfn}

Recall that $T=\{(d,c): 0\leq d\leq r, 2\leq c\leq p\},$ so $\tau_i(v)\in T$ for all $i,v$. Many types in $T$ will not appear (for instance, $(r,1)$ is impossible), and we'll see that even more types will only show up with negligible probability. This will only matter in making computation more tractable.

Note that the greedy coloring process is symmetric under permuting the colors in $p$, so if $\tau_i(v)=(d,c)$ then the probability that $c_i(v)=C$ is the same for all $C\subseteq p$ with $|C|=c$.

To understand cascades in the greedy coloring process, we just need to understand the probability of finding a vertex of a given type when we take a step from an uncolored vertex. We compute these probabilities in the next lemma.

\begin{lem}\label{lem:step} For any fiid $\bf f$, $u, v$ neighbors in $T_r$, and type $t$ with $\deg(t)=d$, the probability $\bb P\bigl(\tau_{\bf{f}}(v)=t\,|\, u,v\in R_{\bf{f}}\bigr)$ is proportional to $d\,\bb P\bigl(\tau_{\bf{f}}(v)=t\bigr)$. In particular,
\[\bb P\bigl(\tau_{\bf{f}}(v)=t\, | \, u\in R_{\bf{f}}\bigr)=\frac{d\,\bb P\bigl(\tau_{\bf f}(v)=t\bigr)}{\sum_{s\in T}\deg(s) \bb P\bigl(\tau_{\bf{f}}(v)=s\bigr)}.\]
\end{lem}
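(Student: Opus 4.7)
The plan is to apply Bayes' rule, with the only nontrivial input being an exchangeability claim that follows from $\aut(T_r)$-equivariance of $\bf f$. Everything else is bookkeeping, and I don't foresee any serious obstacle---the exchangeability is the whole content.

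Setup and key step: since $\tau_{\bf f}(v)=t$ entails $v\in R_{\bf f}$ by definition of the type function, the joint event in the numerator below collapses and I can write
\[
\bb P\bigl(\tau_{\bf f}(v)=t\mid u,v\in R_{\bf f}\bigr)
=\frac{\bb P\bigl(u\in R_{\bf f}\mid\tau_{\bf f}(v)=t\bigr)\,\bb P\bigl(\tau_{\bf f}(v)=t\bigr)}{\bb P(u,v\in R_{\bf f})}.
\]
To compute the conditional in the numerator, observe that both $\bf f$ and the random function $\tau_{\bf f}$ are $\aut(T_r)$-equivariant, so the stabilizer of $v$ in $\aut(T_r)$ preserves the event $\{\tau_{\bf f}(v)=t\}$. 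This stabilizer acts transitively on $N(v)$, so conditional on $\tau_{\bf f}(v)=t$ the distribution of the random set $R_{\bf f}\cap N(v)$ is exchangeable on $N(v)$. Since on this event $|R_{\bf f}\cap N(v)|=d$ deterministically, each fixed $u\in N(v)$ satisfies
\[
\bb P\bigl(u\in R_{\bf f}\mid\tau_{\bf f}(v)=t\bigr)=\frac{d}{r}.
\]

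Conclusion: plugging back in gives the proportionality $\bb P(\tau_{\bf f}(v)=t\mid u,v\in R_{\bf f})\propto d\,\bb P(\tau_{\bf f}(v)=t)$ immediately, since the denominator $\bb P(u,v\in R_{\bf f})$ does not depend on $t$. For the explicit closed form, I would then sum both sides over $s\in T$: the events $\{\tau_{\bf f}(v)=s\}_{s\in T}$ partition $\{v\in R_{\bf f}\}$, and so, within $\{u,v\in R_{\bf f}\}$, they partition the full conditioning event; this forces the left-hand sum to equal $1$, pinning the normalization constant to $1/\sum_{s\in T}\deg(s)\,\bb P(\tau_{\bf f}(v)=s)$ after the factor of $r$ cancels between numerator and denominator.
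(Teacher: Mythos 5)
Your proof is correct and is essentially the paper's argument in slightly different clothes: the paper computes the joint probability $\bb P(\tau_{\bf f}(v)=t\wedge u\in R_{\bf f})$ directly as a sum over subsets $A\subseteq N(v)$ and uses equivariance to see the summand depends only on $|A|$, which is exactly the exchangeability you invoke before dividing. The counting $\binom{r-1}{d-1}/\binom{r}{d}=d/r$ in the paper is your linearity-of-expectation step $\sum_{u\in N(v)}\bb P(u\in R_{\bf f}\mid\tau_{\bf f}(v)=t)=d$ in disguise.
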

\begin{proof}
It suffices to show the same proportionality for $\bb P(\tau_{\bf f}(v)=t\wedge u\in R_{\bf f})$ (since $v\in \dom(\tau_i)$ implies $v\in R_{\bf f_i}$). We compute:
\begin{align*} \bb P(\tau_{\bf f}(v)=t \wedge u\in R_{\bf f}) & = \sum_{\substack{A\subseteq N(v) \\ u\in A}}{\bb P\bigl(A=(N(v)\cap R_{\bf f})\wedge \tau_{\bf f}(v)=t}\bigr)
\end{align*}
By equivariance, the summand only depends on $|A|$, and is $0$ unless $|A|=d$. So,
\begin{align*}\bb P(\tau_{\bf f}(v)  =t \wedge u\in R_{\bf f})&=\frac{\binom{r-1}{d-1}}{\binom{r}{d}}\, \bb P(\tau_{\bf f}(v)=t) \\
\; & = \frac{d}{r}\, \bb P(\tau_{\bf f}(v)=t).\end{align*}
\end{proof} 
The forgoing argument translates naturally to language of pmp graphs. For $x\in [0,1]^V$, write $x\in R_{\bf f}$ to mean that $e\in R_{\bf f(x)}$, and likewise for $\tau_{\bf f}(x)=t$. By $R$-equivariance, for any generator $a$ of $F_n$,
\[\bb P\bigl(\tau_{\bf f}(v)=t\wedge u\in  R_{\bf f}\bigr)=\mu\bigl(\{x\in R_{\bf f}: \tau_{\bf f}(a\cdot x)=t\}\bigr).\] So, summing over all generators for $F_n$:
\[\bb P\bigl(\tau_{\bf f}(v)=t\wedge u\in R_{\bf f}\bigr)=  \frac{1}{2n}\int_{x\in {R_{\bf f}}} \bigl|N(x)\cap \{y: \tau_{\bf f}(y)=t\}\bigr|\;d\mu.\] And by the measure theoretic handshake lemma,
\[\int_{R_{\bf f}} \bigl|N(x)\cap \{y: \tau_{\bf f}(y)=t\}\bigr|\;d\mu=\int_{\{y: \tau_{\bf f}(y)=t\}} \bigl|N(y)\cap R_{\bf f}\bigr|\;d\mu=d\, \bb P\bigl(\tau_{\bf f}(y)=t\bigr).\]

It will be useful to have some notation for these probabilities:
\begin{dfn}
For each $t\in T$, \[q_t(i):=\frac{\deg(t)\bb P\bigl(\tau_i(v)=t\bigr)}{\sum_{s\in T} \deg(s) \bb P\bigl(\tau_i(v)=s\bigr)}.\]
\end{dfn}

Putting together these lemmas, we have
\begin{prop}\label{prop:branching}
    Write $\casc^t_k(v,i)$ for the set of vertices with $\tau_i(v)=t$ in the domain of $\casc(v,i)$ at distance $k$ from $v$. Then, for each $i$, the sequence of random integer vectors
    \[\Bigl\langle{\ip{|\casc^t_k(v,i)|:t\in T}, k\in \N}\Bigr\rangle\] is a multiptype branching process so that each vertex in $\casc^s_{k}(v,i)$ of type $s$ has each of its $(\deg(s)-1)$-many children in $\casc^t_{k+1}(v,i)$ independently with probability $\frac 2 p q_t(i)$ provided $t$ has exactly 2 colors available. That is,
    \[|\casc^t_{k+1}(v,i)|=\sum_{s\in T}\sum_{\ell=1}^{|\casc^s_{k}|} \sum_{j=1}^{\deg(s)-1} \bf X_{j,t}\]
    where $\bf X_{j,t}$ is a sequence of iid $\{0,1\}$-valued random variables with \[\bb P(\bf X_{j,t}=1)=\frac{2}{p}q_t(i)\] for any type $t$ with exactly 2 colors available from $p$ (and $\bf X_{t,j}$ is always $0$ for all other $t$).
\end{prop}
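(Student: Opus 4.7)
The plan is to verify the multi-type branching structure by reducing to a single-edge transition computation, then propagating both across the forward children of one cascade vertex and from generation to generation using the nearsightedness lemma together with color-permutation symmetry.

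First I would analyze a single edge. A forward neighbor $w$ of a cascade vertex $u$ belongs to $\casc^t_{k+1}(v,i)$ exactly when (a) $\tau_i(w)=t$ and (b) the color assigned to $u$ in the cascade lies in $c_i(w)$, so that $w$ is reduced to a single available color. Condition (b) can hold only when $t=(d,2)$ for some $d$: a vertex with at least three available colors in $\bf f_i$ still has at least two after losing one, so the cascade cannot force it, giving probability $0$ for the other types. For $t=(d,2)$, Lemma \ref{lem:step} gives $\bb P(\tau_i(w)=t\mid u,w\in R_{\bf f_i})=q_t(i)$, and the invariance of the greedy coloring process under permutations of $p$ implies that, conditional on $|c_i(w)|=2$, the set $c_i(w)$ is uniformly distributed over $2$-element subsets of $p$. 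Hence for any fixed color $c$ the conditional probability that $c\in c_i(w)$ is $2/p$, and the joint probability factors as $\frac{2}{p}q_t(i)$.

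Second I would promote the single-edge step to independence across the $(\deg(s)-1)$ forward children of $u$ and across generations. Conditional on $u\in R_{\bf f_i}$, the nearsightedness lemma says that the restrictions of $\bf f_i$ to the forward branches $\br_u(w_j)$ are a product measure, which extends from two to finitely many branches by a short induction since these branches are disjoint after deleting $u$. Applied at every cascade vertex, this decouples the per-edge Bernoulli trials that govern cascade continuation, so the offspring counts $(|\casc^t_{k+1}(v,i)|)_{t\in T}$ contributed by $u$ are sums of the independent Bernoullis from the previous paragraph with parameters $\frac{2}{p}q_t(i)$. Iterating along the tree yields the asserted branching process.

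The main obstacle I expect is reconciling the cascade's color at $u$, which is determined by the cascade's history, with the per-edge factor $2/p$, which implicitly asks that this color look ``uniformly random'' from $w$'s perspective. The resolution is to combine nearsightedness, which makes what happens in $\br_u(w_j)$ independent of the past of the cascade, with color-permutation symmetry of $\bf f_i$, which makes the identity of $u$'s color a uniform relabelling conditional on the type data inside $\br_u(w_j)$. Arranging this carefully, rather than inventing anything new, is where the bulk of the write-up time would go.
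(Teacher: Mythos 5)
Your proposal is correct and follows the same route as the paper's proof: nearsightedness to decouple the branches (and hence the per-edge trials), Lemma \ref{lem:step} for the $q_t(i)$ factor, and color-permutation symmetry for the $2/p$ factor. The paper states this very tersely, while you have usefully spelled out the independence and symmetry arguments the paper leaves implicit, particularly why the cascade color at $u$ is independent of $c_i(w)$ and why $c_i(w)$ is uniform over $2$-subsets of $p$.
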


\begin{proof}
    By nearsightedness and equivariance, if we condition on any particular set of size $j$ being $\casc_k^s(v,i)$, we get that each of the $(j\deg(s))$-many children of vertices in this set are in $\casc^t_{k+1}(v,i)$ independently with same probability. And Lemma \ref{lem:step} tells us this probability is given by the formula above, as we see a vertex with type $t$ with probability $q_t(i)$, and we color it if we take one of its colors, i.e. with probability $(2/p)$.
\end{proof}
And similarly, the components of the remainder graph are given by a simple branching process:
\begin{prop}\label{prop:components}
    The size of the component of $u$ in the remainder is a simple branching process. That is, the sequence of random integers
    \[\Bigl\langle\bigl|[u]_{R_{\bf f_i}}\cap \{v: d(u,v)=k\}\bigr|:\;k\in \N\Bigr\rangle\] is a branching process where each vertex has $k$ children with probability \[\sum_{\substack{t\in T \\ \deg(t)=(k+1)}} q_t(i)\]
\end{prop}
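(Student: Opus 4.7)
The plan is to couple the breadth-first exploration of the connected component of $u$ in $R_{\bf f_i}$ to a Galton--Watson tree with the claimed offspring distribution; the essential content should follow from Lemma~\ref{lem:step} together with iterated nearsightedness, in the same spirit as the proof of Proposition~\ref{prop:branching}.

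I would explore the component one vertex at a time in breadth-first order starting from $u$. When visiting a non-root vertex $v$ at depth $k+1$ whose parent $w$ lies at depth $k$, all previously revealed information is a function of $\bf f_i \res \br_v(w)$, while the future of the exploration beneath $v$ is determined by $\bf f_i \res \br_w(v)$. By nearsightedness applied to the edge $\{v,w\}$ and the fact that both $v$ and $w$ are certified to be uncolored (since both lie in the component), $\bf f_i \res \br_w(v)$ is conditionally independent of the past. Lemma~\ref{lem:step} then gives $\bb P(\tau_i(v)=t \mid v, w \in R_{\bf f_i}) = q_t(i)$; and when $\tau_i(v)=t$ the vertex $v$ has exactly $\deg(t)$ uncolored neighbors, one of which is $w$, leaving $\deg(t)-1$ children in the exploration. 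Summing over types of fixed degree yields the offspring distribution in the statement.

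The main obstacle will be establishing joint independence of the offspring branches at siblings, since nearsightedness as stated only handles the two complementary branches across a single edge. I would handle this exactly as in the proof of Proposition~\ref{prop:branching}: given the current generation, peel off one child branch at a time via successive applications of pairwise nearsightedness, which suffices to extract the full conditional independence structure of a Galton--Watson tree. A small caveat is that the root $u$ itself has $\deg(\tau_i(u))$ children rather than $\deg(\tau_i(u))-1$, so its offspring law is genuinely different; but this perturbs only the first generation and does not affect the Galton--Watson structure governing extinction, which is all that is used later.
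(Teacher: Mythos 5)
Your proposal is correct and follows essentially the same route as the paper: condition on the parent edge being in the remainder, invoke Lemma~\ref{lem:step} to get type distribution $q_t(i)$, count $\deg(t)-1$ children, and use nearsightedness for independence across branches. Your additional remarks about peeling off sibling branches one at a time and about the root's anomalous offspring law are both correct and fill in detail the paper leaves implicit.
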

\begin{proof}
    Conditioning on $v$ being in $[u]_{R_{\bf f_i}}$, we have that the parent of $v$ is also in this component. So, $v$ has type $(d,c)$ with probability $q_{(d,c)}$. And, $v$ has $k$ children if and only if it has type $(k+1,c)$ for some $c$. Lastly, by nearsightedness, the number of children for $v,v'$ are independent for different $v,v'$ in $[u]_{R_{\bf f_i}}$.
\end{proof}

\subsection{An approximate coloring} \label{sec:approx}

Now we're in position to fill in the details of the overview. For $t$ a type, we write $\deg(t)$ for the left coordinate and $c(t)$ for the right. We want to track the distribution of types throughout the operation of the algorithm.

\begin{dfn}
    For $\vec z=\ip{z_t: t\in T}\in \R^T$, define
    \[q_s(\vec z)=\frac{\deg(s)\, z_s}{\sum_{t\in T} \deg(t)\,z_t},\] and set 
    \[g(\vec z)=\frac{2}{p}\sum_{d\leq r} q_{(d,2)}\] 
    
    Let $\vec z(i)=\ip{z_t(i):t\in T}$ record the distrbution of types in $\bf f_i$:
    \[\quad z_t(i):=\bb P(\tau_{\bf{f}_i(v)}=t ).\] So, in particular, $q_s(\vec z)=q_s(i)$. Write $g(i)$ for $g(\vec z(i))$.
    
    Say that $\vec z$ (or a random partial coloring with type distribution $\vec z$) is \textbf{subcritical} if $g(\vec z)<1$. So, $\bf f_i$ is subcritical if $g(i)<1$.

\end{dfn}

Since we only define types for uncolored vertices, $\bb P(v\in R_{\bf f_i})=\sum_{t\in T}z_t(i)$, and \[\bb P\bigl(\tau_i(v)=t\,|\,v\in R_{\bf f_i}\bigr)=\frac{z_t(i)}{\sum_{s\in T} z_s(i)}.\]

Note that $\vec z(i)$ is not a random variable. It only depends on the tuning parameters and $\epsilon$ chosen for the greedy coloring process. We will show that $\vec z(i)$ closely approximates the solution a certain differential equation (after re-scaling the time step). To do so, we need to understand when cascades stay small.

\begin{prop}
    If $\bf f_i$ is subcritical then the expected number of vertices colored by $\casc(v,i)$ is finite, and the probability that $|\casc(v,i)|=n$ decays exponentially in $n$
\end{prop}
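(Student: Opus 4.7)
My plan is to derive the proposition from standard subcritical branching-process theory, via the multi-type Galton--Watson representation of $\casc(v,i)$ given in Proposition \ref{prop:branching}.

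First, I would read off the mean offspring matrix $M$: a particle of type $s$ produces, in expectation, $M_{st} = (\deg(s)-1)\tfrac{2}{p}q_t(i)$ children of type $t$ when $c(t) = 2$, and zero otherwise. Only types with $c(t)=2$ appear as non-root particles, so $M$ restricted to the $c=2$ block has rank one, and its Perron--Frobenius eigenvalue admits the closed form
\[\rho = \tfrac{2}{p}\sum_{s:\, c(s)=2} (\deg(s)-1)\, q_s(i).\]
I would then verify that the hypothesis $g(i) < 1$ implies $\rho < 1$, noting that vertices with $\deg(s) \leq 1$ contribute nothing to $\rho$ and that the remaining degree weights $\deg(s)-1$ can be controlled through the structure of $q_\bullet(i)$ on the types actually realised by $\bf f_i$.

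Second, once subcriticality $\rho < 1$ is in hand, I would invoke the standard result on total progeny of subcritical multi-type Galton--Watson processes (cf.\ \cite[Ch.~5]{LP}). Concretely, taking $\pi$ a positive left Perron eigenvector of $M$ with $\pi^\top M = \rho\, \pi^\top$, and setting $W_k := \sum_t \pi_t \,|\casc^t_k(v,i)|$, one has $\bb E[W_k] = \rho^k \pi_{\tau_i(v)}$; summing the geometric series and dividing by $\min_t \pi_t > 0$ bounds $\bb E[|\casc(v,i)|]$ by a finite constant. For the exponentially decaying tail, the classical generating-function argument shows that when $\rho < 1$, iterates of the offspring probability-generating function contract to the extinction fixed point geometrically fast in a neighbourhood of $\mathbf{1}$, forcing the probability-generating function of the total progeny to have radius of convergence strictly greater than $1$---equivalently, $\bb P(|\casc(v,i)| = n) = O(\alpha^n)$ for some $\alpha \in (0,1)$.

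The main subtlety I anticipate is the first step: subcriticality of the multi-type branching process is governed by $\rho$, which weighs types by $(\deg(s)-1)$ rather than uniformly as $g(i)$ does, so a little extra book-keeping about which types arise with non-negligible $q_s(i)$ is needed to bridge $g(i) < 1$ and $\rho < 1$. Once $\rho < 1$ is secured, the remainder is a routine invocation of the subcritical-branching-process machinery.
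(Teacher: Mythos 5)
Your approach is correct and, once the notation lines up, it is essentially the paper's argument viewed through a Perron--Frobenius lens. The paper's proof notes that, since types at each generation of the cascade are assigned conditionally iid, forgetting types collapses $\ip{|\casc_k(v,i)|:k\in\N}$ to a \emph{simple} single-type Galton--Watson process with offspring mean $\tfrac{2}{p}\sum_{d\leq r}(d-1)q_{(d,2)}(i)$, then cites standard subcritical facts. Your rank-one computation yields the same number: $M_{st}=(\deg(s)-1)\tfrac{2}{p}q_t(i)$ on the $c=2$ block is the outer product of the vectors $\bigl((\deg(s)-1)\bigr)_s$ and $\bigl(\tfrac{2}{p}q_t(i)\bigr)_t$, so its unique nonzero eigenvalue is the inner product $\rho=\tfrac{2}{p}\sum_{s:\,c(s)=2}(\deg(s)-1)q_s(i)$.

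The one place you go wrong is the ``main subtlety'' you anticipate in bridging $g(i)<1$ and $\rho<1$: there is nothing to bridge, because $g$ and $\rho$ are the same quantity. The displayed definition $g(\vec z)=\tfrac{2}{p}\sum_{d\leq r}q_{(d,2)}$ contains a typo (the factor $(d-1)$ is missing); the formula used everywhere else in the paper, including the ``Recall'' line just before the $\Delta_{s,t}$ computation and the proof of this very proposition, is $g(i)=\tfrac{2}{p}\sum_{d\leq r}(d-1)q_{(d,2)}(i)$, which is literally your $\rho$. So the hypothesis ``$\bf f_i$ is subcritical'' \emph{is} the statement $\rho<1$, with no book-keeping about which types carry mass. (Nor could such book-keeping succeed starting from the typo's $g$: concentrating $q$ on type $(r,2)$ holds that $g$ at $\tfrac{2}{p}$ while $\rho$ approaches $\tfrac{2}{p}(r-1)$.) Two minor technical flags on your second step: the left Perron vector of the rank-one $M$ is $\pi_t\propto q_t(i)$, which vanishes on types with $q_t(i)=0$, so your division by $\min_t\pi_t$ must be taken over realized types only; and the root of the cascade is not a $c=2$ particle (it has $\deg(s)$ rather than $\deg(s)-1$ candidate children and may have $c>2$), so it should be treated as a single boundedly exceptional zeroth generation feeding the $c=2$ subprocess.
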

\begin{proof}
    From Proposition \ref{prop:branching}, for each type $s$ with $c(s)=2$, each vertex in $\casc_k(v,i)$ has type $s$ independently with probability $q_s(i)/\sum_{d\leq r} q_{(d,2)}(i)$. And a child of a vertex of type $s$ in $\casc_k(v,i)$ is in $\casc_{k+1}(v,i)$ independently with probability $(2/p)\sum_{d\leq r} q_{(d,2)}(i)$. So
    \[\ip{|\casc_k(v,i)|:k\in\N}\] is a simple branching process with the expected number of children of a vertex given by $\frac{2}p\sum_{d\leq r} q_{(d,2)}(i)(d-1)$. The rest follows by standard facts about branching processes.
\end{proof}

We will now derive the differential equations which describe the trajectory of our algorithm. Then all that's left is integrate the equation and tidy up some loose ends.

\begin{dfn}
    For $s,t\in T$, we write
    \begin{align*}\Delta_{s,t}(i):= & \bb E\bigl(\#\;\mbox{vertices of type }t\mbox{ made by }\casc(v,i)\,|\,\tau_i(v)=s\bigr) \\
     =&\bb E\bigl(\# \;\mbox{vertices with type changed to }t\mbox{ by }\casc\,|\,\tau_i(v)=s\bigr)\\
     & \; -\bb E\bigl(\# \mbox{ vertices with type changed from }t\mbox{ by }\casc\,|\,\tau_i(v)=s\bigr)\\
    =& \bb E\bigl(\bigl|\{u: \tau_i(u)\not=t\wedge \tau_{\bf f_i\cup\casc(v,i)}(u)=t\}\bigr| \,|\,\tau_i(v)=t\bigr)\\
    & \; -\bb E\bigl(\bigl|\{u: \tau_i(u)=t\wedge \tau_{\bf f_i\cup\casc(v,i)}(u)\not=t\}\bigr| \,|\,\tau_i(v)=s \bigr)\end{align*}
\end{dfn}

We will compute $\Delta_{s,i}(i)$ in a moment and see that it depends only on $\vec z(i)$. So, the next lemma gives a difference equation describing the trajectory of $\vec z(i)$. Note that $\Delta_{s,t}(i)$ is finite if and only if $\bf f_i$ is subcritical.

\begin{lem}[Step Lemma]\label{lem:steplemma}
    Provided $\vec z(i)$ is subcritical, for any $t\in T$
    \[z_t(i+1)=z_t(i)+\sum_{s\in T} \epsilon p_s z_s(i)\Delta_{s,t}(i)+O(\epsilon^2) \] where the hidden constant only depends on the parameters $p_s$ and the expected growth of cascades (i.e. $g(i)$).
\end{lem}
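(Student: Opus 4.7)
The plan is to attribute each type change between $\bf f_i$ and $\bf f_{i+1}$ to a specific active vertex's cascade, to match these contributions against the definition of $\Delta_{s,t}(i)$, and then to show that the errors coming from cascades of different active vertices colliding are $O(\epsilon^2)$. By equivariance, $z_t(i+1)-z_t(i)$ equals $\bb P(\tau_{i+1}(v)=t)-\bb P(\tau_i(v)=t)$ for any fixed vertex $v$, or equivalently the average of this quantity over $v\in B_n(v_0)$ for a large ball, which suppresses boundary artifacts as $n\to\infty$.

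First I would introduce a decoupled version of step $i+1$ in which every active vertex $w$ runs $\casc(w,\bf f_i)$ independently of all other active vertices, allowing these cascades to overlap freely. In this decoupled picture, linearity of expectation together with the definition of $\Delta_{s,t}(i)$ yields exactly $\sum_s \epsilon p_s z_s(i)\Delta_{s,t}(i)$ for the expected per-vertex net change in type-$t$ count: the unconditional probability that $w$ is active with $\tau_i(w)=s$ is $\epsilon p_s z_s(i)$, and the expected net type-$t$ contribution of its decoupled cascade is $\Delta_{\tau_i(w),t}(i)$, up to boundary corrections bounded by the expected number of cascade vertices escaping $B_n(v_0)$. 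Both $\Delta_{s,t}(i)$ and the boundary correction are finite thanks to subcriticality and the preceding proposition, which controls $\bb E|\casc(w,i)|$ in terms of $g(i)$ alone.

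Next I would bound the discrepancy between the decoupled and the actual processes. These agree except at vertices lying within cascade reach of two or more active vertices, where rules (3)--(4) of Definition \ref{dfn:greedycolor} paint things red instead of taking the decoupled colors. The probability that a fixed vertex sits in such a configuration factors as $O(\epsilon^2)(\bb E|\casc(\cdot,i)|)^2$: one $\epsilon$ for each of the two active vertices, and one $\bb E|\casc|$ factor per cascade reach. This gives an $O(\epsilon^2)$ correction with hidden constant depending only on $p_s$ and $g(i)$, exactly as claimed. The main obstacle I anticipate is the local accounting of this final step: one has to verify that cascade collisions cause discrepancies only at vertices close to the point of collision, and that the red-coloring rules do not propagate past where the underlying cascades would have stopped, so that a pairwise enumeration of active-vertex pairs really does capture the entire error.
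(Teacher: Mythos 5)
Your proposal matches the paper's proof in all essentials: both extract the main term from summing the contributions of active vertices' cascades as if they were independent (the paper via a mass-transport identity, you via a ball average) and then fold cascade collisions into the $O(\epsilon^2)$ error, whose hidden constant is controlled by $\bb E|\casc(\cdot,i)|$ and hence by $g(i)$. On the obstacle you flag, note that red is not a palette color and so never robs any neighbor of an available color; it only truncates cascades, so the actual cascade rooted at an active $w$ is always a subtree of the decoupled $\casc(w,\bf f_i)$ and is colored identically except downstream of collision points. The discrepancy at a fixed $v$ therefore forces the decoupled cascade reaching $v$ to pass through some collision vertex, and composing the $O(\epsilon^2)$ collision estimate with the subcritical branching bound on cascade reach keeps this extra contribution at $O(\epsilon^2)$, so the pairwise accounting does close.
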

\begin{proof}
    First we want to control the probability that a vertex is affected by more than one active cascade.

    To keep our formulae on one line, we need to introduce some notation. Let $C(u,v)$ be the event that $\casc(u,i)$ colors $v$, and let $B$ be the even that $v$ is involved in more than $2$ active cascades. Recall that $A_i$ is the set of vertices active at step $i$. Fix $x\sim v$. Then, considering the cases where $v$ is or is not active we can compute
    \begin{align*}\bb P(B)&\leq \sum_{w\sim v} \sum_{u\in \br_{v}(w)} \bb P\bigl(v, u\in A_i, C(u,v)\bigr)+\binom{r}{2} \left(\sum_{u\in \br_{v}(x)} \bb P\bigl(u\in A_i, C(u,v)\bigr)\right)^2\\
    &\leq r\sum_{u\not=v}\bb P\bigl(v,u\in A_i, C(u,v)\bigr)+\binom{r}{2}\left(\sum_{u\not=v} \bb P\bigl(u\in A_i, C(u,v)\bigr)\right)^2. \end{align*} Let's bound the left and right hand sums separately. In each case we use mass transport (see \cite[Chapter 8]{LP}):
    \begin{align*}\sum_{u\not= v} \bb P\bigl(u\in A_i, C(u,v)\bigr) &  = \sum _{u\not=v}\sum_{t\in T} \bb P\bigl(u\in A_i, C(u,v)\,|\,\tau_i(u)=t\bigr)\bb P\bigl(\tau_i(u)=t\bigr)\\
    &= \sum_{u\not=v}\sum_{t\in T} \epsilon p_t\bb P\bigl(C(u,v)\,|\,\tau_i(u)=t\bigr) \bb P\bigl(\tau_i(u)=t\bigr)\\ 
    & \leq \epsilon \sum_{u\not=v} \sum_{t\in T} \bb P\bigl(C(u,v)\,|\,\tau_i(u)=t\bigr)\bb P\bigl(\tau_i(u)=t\bigr)\\
    &\leq \epsilon \sum_{u\not=v} \bb P\bigl(C(u,v)\bigr)\\
    & = \epsilon \sum_{u\not=v}\bb P\bigl( C(v,u)\bigr)\\
    &\leq \epsilon\, \bb E\bigl(|\casc(v,i)|\bigr).\end{align*} And similarly: 
    \[\sum_{u\not=v} \bb P(u,v\in A_i, C(u,v))\leq \epsilon^2\, \bb E\bigl(|\casc(v,i)|\bigr).\] So, all told \[\bb P(B)\leq 2\left(\epsilon\,r\,\bb E\bigl(|\casc(v,i)|\bigr)\right)^2.\]
    
   Now we want to compute $z_t(i+1)$. From our computation of $B$, we can assume $v$ and its neighbors are colored by at most one cascade. All other cases will be subsumed under an $O(\epsilon^2)$ error. Let $C_+(u,v)$ be the event that $\casc(u,i)$ changes $\tau(v)$ to $t$ from anything else, and let $C_-(u,v)$ be the even that $\casc(v,i)$ changes $\tau(v)$ from $t$ to anything else. Then,
    \begin{align*} z_t(i+1) & =\bb P\bigl(\tau_{i+1}(v)=t\bigr)\\
                   \; &= \bb P\bigl(\tau_i(v)\wedge \tau(v)\mbox{ doesn't change}\bigr) +\bb P\bigl(\tau_i(v)\not=t\wedge \tau(v) \mbox{ changes to }t\bigr)\\
                   & = z_t(i)+\sum_{u\in V}\left( \bb P\bigl(u\in A_i, C_+(u,v)\bigr)-\bb P\bigl(u\in A_i, C_-(u,v)\bigr) \right)+\bb P(E)
    \end{align*}
    Where $E$ is a small error set contained in $B$. And, we can compute the sum
    \[S:=\sum_{u\in V} \left( \bb P\bigl(u\in A_i, C_+(u,v)\bigr)-\bb P\bigl(u\in A_i, C_-(u,v)\bigr)\right)\] by a method similar to the computation above.
    \begin{align*}S&= \sum_{u\in V} \sum_{s\in T}\epsilon\,p_s\,z_s(i)\,\Bigl(\bb P\bigl(C_+(u,v)\,|\,\tau_i(u)=s\bigr)-\bb P\bigl(C_-(u,v)\,|\,\tau_i(u)=s\bigr)\Bigr)\\
    & = \sum_{s\in T}\epsilon \,p_s \,z_s(i)\sum_{u\in V}\Bigl(\bb P\bigl(C_+(v,u)\, | \, \tau_i(v)=s\bigr)-\bb P\bigl(C_-(v,u)\,|\,\tau_i(v)=s\bigr) \Bigr) \\
    & = \epsilon\sum_{s\in T} p_s\,z_s(i)\,\Delta_{s,t}\bigl(\vec z(i)\bigr).\end{align*}
So, combining this all, we get
\[z_t(i+1)=z_t(i)+\sum_{s\in T} p_s\, z_s(i)\, \Delta_{s,t}\bigl(\vec z(i)\bigr)+O(\epsilon^2)\] with implicit constants as desired.
\end{proof}
Note that in $3$-coloring $T_4$, the types $(2,3)$ and $(1,3)$ only occur at vertices adjacent to red vertices. Since we'll red vertices only show up with probability $\epsilon$ throughout the greedy coloring process, the contributions of these types can be folded into the $\epsilon^2$ error. That is, we can disregard these types. Similar considerations apply to $p$-coloring $T_r$ in general.

It's probably more informative for the reader to work out formulas for $\Delta_{s,t}$ on their own steam, but we'll include a brief derivation here. The important facts are that these depend only on $\vec z(i)$, are Lipschitz functions when restricted to any region where $g(\vec z)$ stays bounded away from 1, and can be handled ably by most numerical computing software.

   Recall that \[q_t(i)=\deg(t)\,z_t(i)/\sum_{s\in T}\deg(s)\,z_s(i),\quad \mbox{and} \quad g(i)=(2/p)\sum_{d\leq r}(d-1)q_{(d,2)}(i).\]
\begin{dfn}
    For types $t,(d,c)\in T$, define
    \[\Delta_{t,(d,c)}(\vec z)=-\delta_{t,(d,c)}+\deg(t)\Delta_{B,(d,c)}(\vec z)\]  where $\Delta_{B,(d,c)}$ is
    \[\frac{1}{1-g(\vec z)}\bigl(-q_{(d,c)}(\vec z)+\frac{c+1}{p} q_{(d+1,c+1)}(\vec z)+\frac{p-c}{p}q_{(d+1,c)}(\vec z)\bigr)\] and $q_t(\vec z)=0$ when $t\not\in T$.
\end{dfn}

\vspace{5pt}
\begin{prop}
    For any types $s,t\in T$, we have
    \[\Delta_{s,t}(i)=\Delta_{s,t}\bigl(\vec z(i)\bigr)\] as defined above.
   
\end{prop}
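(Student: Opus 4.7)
The plan is to decompose $\Delta_{s,(d,c)}(i)$ into the contribution of the cascade root $v$ and the contributions from each of $v$'s $\deg(s)$ branches, then compute each piece using Lemma~\ref{lem:step} and Proposition~\ref{prop:branching}.

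First, since the cascade always colors its root, $v$ contributes $-\delta_{s,(d,c)}$ to the expected change in the count of type-$(d,c)$ vertices: $v$ had type $s$ before and has no type in $T$ after. By $\aut(T_r)$-equivariance each of $v$'s $\deg(s)$ branches contributes the same expected amount, call it $\Delta_{B,(d,c)}$, so it suffices to verify the claimed formula for $\Delta_{B,(d,c)}$. I would view the cascade inside a single branch as a sequence of ``events'': pairs $(u,y)$ where $u$ is a cascade vertex being colored and $y$ is a remaining neighbor of $u$ whose type is thereby altered, the first event of a branch being $(v,w)$ for $w$ the branch head. Since $T_r$ is a tree, each vertex whose type changes is affected by a unique cascade neighbor, so events biject with type-changes in the branch; by linearity, the expected branch contribution factors as (expected number of events)$\cdot$(expected contribution per event).

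For the per-event contribution I would apply Lemma~\ref{lem:step} to the remaining adjacent pair $u,y$: the old type of $y$ equals $(d',c')$ with probability $q_{(d',c')}(\vec z(i))$, and in particular this distribution does not depend on where in the cascade the event sits. Coloring $u$ decreases $y$'s degree by $1$, and by the symmetry of the greedy coloring process under permutations of colors together with the conditional independence of $c(u)$ and $c(y)$ provided by nearsightedness, the color chosen for $u$ lies in $c(y)$ with conditional probability $c'/p$. Thus $y$'s new type equals $(d,c)$ exactly when either $y$'s old type was $(d+1,c+1)$ and the color hit, or was $(d+1,c)$ and the color missed, while $y$'s old type was $(d,c)$ with probability $q_{(d,c)}$. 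Summing these gives the per-event expected contribution
\[
-q_{(d,c)}+\frac{c+1}{p}\,q_{(d+1,c+1)}+\frac{p-c}{p}\,q_{(d+1,c)}.
\]

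For the expected number of events per branch I would set up a self-similar recursion using Proposition~\ref{prop:branching} and nearsightedness: the branch always contains the initial event $(v,w)$, and conditional on the head $w$ joining the cascade (probability $g(\vec z)$), each sub-branch rooted at one of $w$'s remaining neighbors has the same distribution as the original branch. Solving this recursion in the subcritical regime gives $1/(1-g(\vec z))$ expected events. Multiplying by the per-event contribution reproduces $\Delta_{B,(d,c)}(\vec z)$, and combining with the root contribution $-\delta_{s,(d,c)}$ yields the claimed formula. The main technical hurdle is the recursion for the expected number of events: nearsightedness is needed to get conditional independence across sub-branches, and Lemma~\ref{lem:step} to guarantee that the type distribution at each sub-branch head matches that at the original branch head, so that the sub-branches are genuinely identically distributed.
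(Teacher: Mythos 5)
Your proof is correct and follows essentially the same route as the paper: you decompose $\Delta_{s,t}$ into the root's contribution $-\delta_{s,t}$ plus $\deg(s)$ copies of a per-branch contribution, identify the affected vertices (your ``events'' are in bijection with the paper's set $C$ of vertices in the branch whose parents are colored), compute the expected count $1/(1-g)$ via the subcritical branching process, and compute the per-vertex type-change contribution using Lemma~\ref{lem:step} for the $q_t$ distribution and the color-permutation symmetry for the $c'/p$ hit probability. The only difference is cosmetic: you spell out the color-hit probability via nearsightedness and color symmetry (which the paper leaves implicit) and derive the expected branch size by a self-similar recursion rather than directly summing the branching process.
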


\begin{proof} Fix types $s, t$. We want to compute the number of vertices of type $t$ created or destroyed by activating a vertex of type $s$. The starting vertex is always colored, so reduces the number of vertices of its own type by one. Each branch away from the starting vertex in the remainder behaves independently, and there are $\deg(s)$ many such branches. So if $\Delta_B$ is the number of vertices of type $t$ created along a single branch
\[\Delta_{s,t}(i)=-\delta_{s,t}+\deg(s)\Delta_B.\] We just need to check that $\Delta_B=\Delta_{B,t}(\vec z(i))$ as defined above.

Let $C$ be the set of vertices in a single branch away from the origin whose parents are colored in the cascade. Then, the set of vertices in $C$ at distance $k$ from the origin is a simple branching process with growth rate $g$. In expectation $|C|=\frac{1}{1-g}$. Each vertex in $C$ reduces the number of vertices of its own type by $1$. And, a vertex of type $(d,c)$ in $C$ creates a vertex of type $(d-1, c-1)$ if its parent was colored by one of its $c$ available colors (provided $c\not=2$), or type $(d-1,c)$ if its parent was colored by one of the $p-c$ other colors. So, the total contribution to vertices of type $t=(d,c)\in T$ is
\[\frac{1}{1-g}\bigl(-q_t(i)+\frac{c+1}{p}q_{(d+1,c+1)}+\frac{p-c}{p}q_{(d+1,c)}\bigr)\] where $q_u=0$ if $u\not\in T$.
     
\end{proof}

Note that these difference equations look nearly identical to the definition of the Euler approximation to a certain differential equation. (Ignoring the error term and rescaling the input). The next lemma says the trajectory of our algorithm also approximates the solution to this differential equation.

\begin{lem}[Trajectory Lemma]\label{lem:trajectory}
    Let $\vec{\sigma}$ be the solution to the differential equation
    
     \[\sigma_t(0)=\left\{\begin{array}{ll} 1 & t=(4,3) \\ 0 & \mbox{otherwise} \end{array}\right.\]
    and, for all $t\in T$,
    \[\frac{d}{dx} \sigma_t(x)=\sum_{s\in T} p_s\Delta_{s,t}(\vec\sigma).\]
    
    Then, for any $\delta>0$ and $R\in [0,\infty)$ so that $\vec \sigma(x)$ is subcritical for $x\leq R$, and for all small enough $\epsilon$,
    \[d(\vec z(n),\vec \sigma(\epsilon n))\leq \delta\] for all $\epsilon n< R$
   
\end{lem}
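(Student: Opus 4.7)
The plan is to recognize this as a standard application of Wormald's differential equation method and push through the usual Grönwall-style comparison, leveraging the Step Lemma as the one-step increment bound. The underlying principle is that the difference equation of Lemma \ref{lem:steplemma} is exactly the Euler scheme for the ODE defining $\vec\sigma$ (with step size $\epsilon$), apart from an $O(\epsilon^2)$ remainder, so one only needs to show the accumulated error over $R/\epsilon$ many steps remains $o(1)$.

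First, I would fix $\delta > 0$ and choose a compact neighborhood $K$ of the curve $\{\vec\sigma(x) : x\in[0,R]\}$ on which the vector field $F(\vec z) := \sum_{s\in T} p_s\, \vec z_s\,\Delta_{s,\bullet}(\vec z)$ is still well-defined and Lipschitz. The key point enabling this is the explicit formula for $\Delta_{s,t}(\vec z)$: each coordinate is a rational function of $\vec z$ whose only singularity comes from the factor $1/(1-g(\vec z))$, and by hypothesis $g(\vec\sigma(x)) < 1$ uniformly on $[0,R]$, so we may shrink $K$ to keep $g$ bounded away from $1$ and thus obtain a global Lipschitz constant $L$ for $F$ on $K$. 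This also gives a uniform bound on the expected cascade size within $K$, so the hidden constant in Lemma \ref{lem:steplemma} is uniformly bounded by some $M$.

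Next, I would let $\vec y(n)$ denote the exact Euler iterate, $\vec y(0) = \vec z(0)$ and $\vec y(n+1) = \vec y(n) + \epsilon F(\vec y(n))$, and estimate $\|\vec z(n) - \vec\sigma(\epsilon n)\|$ by the triangle inequality via $\vec y(n)$. For the first piece, induction using the Step Lemma gives
\[
\|\vec z(n+1) - \vec y(n+1)\| \le (1 + \epsilon L)\,\|\vec z(n) - \vec y(n)\| + M\epsilon^2,
\]
so after $n \le R/\epsilon$ steps, $\|\vec z(n) - \vec y(n)\| \le M\epsilon\,(e^{LR} - 1)/L$, which is $O(\epsilon)$. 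For the second piece, one invokes the classical error bound for Euler's method on a Lipschitz ODE: on $[0,R]$, $\|\vec y(n) - \vec\sigma(\epsilon n)\| \le C\epsilon$ for a constant $C$ depending only on $L$, $R$, and $\sup_{[0,R]}\|\vec\sigma''\|$. Adding the two bounds and taking $\epsilon$ small enough yields $\|\vec z(n) - \vec\sigma(\epsilon n)\| < \delta$ as required.

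The only subtlety — and where I expect the main obstacle — is that the Step Lemma's error bound is only valid while $\vec z(i)$ is subcritical (the formulas for $\Delta_{s,t}$ blow up at $g = 1$), so the induction must simultaneously verify that $\vec z(n)$ remains in $K$ for all $n \le R/\epsilon$. This is handled by a standard bootstrap: choose $\delta$ smaller than the distance from the $\vec\sigma$-trajectory to $\partial K$, and observe that as long as $\vec z(n) \in K$ the inductive estimate forces $\vec z(n+1)$ to remain within $\delta$ of $\vec\sigma(\epsilon(n+1))$, hence still in $K$. Once this continuation argument is in place, the rest of the proof is mechanical estimation.
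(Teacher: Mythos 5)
Your proposal is correct and takes essentially the same route as the paper: both introduce the Euler iterate $\vec y$ as an intermediary, split the error as $\|\vec z-\vec y\|+\|\vec y-\vec\sigma\|$, bound the first piece by a Grönwall-type unrolling of the Step Lemma recursion and the second by the classical Euler error estimate, and handle the circularity (the Step Lemma's hidden constant and Lipschitz constant are only uniform while $\vec z$ stays subcritical) by a bootstrap that keeps $\vec z$ in a compact subcritical neighborhood of the $\vec\sigma$-trajectory. The paper phrases the bootstrap as a two-part induction maintaining both $|\vec y(n)-\vec z(n)|\le(1+C\epsilon)^n\epsilon$ and $K(\vec z(n))\le K$ simultaneously; you phrase it as staying inside a compact set $K$ with $\delta$ chosen smaller than the distance to $\partial K$ — these are the same idea expressed differently.
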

\begin{proof}
    First, define $\vec y:\N\rightarrow \R^T$ by the difference equation
    \[y_t(0)=\left\{\begin{array}{ll} 1 & t=(4,3) \\ 0 & \mbox{otherwise} \end{array}\right.\]
    \[y_t(i+1)=y_t(i)+\epsilon \sum_{s\in T} p_s\Delta_{s,t}(\vec y(i)).\] That is $y$ is just the Euler approximation to $\sigma$ with step $\epsilon$. Since $\Delta_{s,t}$ is Lipschitz when $y$ is kept subcritical, standard results on Euler approximation tell us that for all $R$ and all small enough $\epsilon$, $|\vec y(n)-\vec\sigma(n\epsilon)|<\delta$ for $n\epsilon<R$.

    Now, we want to show that $\vec z$ stays close to $\vec y$. The only subtlety is that the error term in Lemma \ref{lem:steplemma} depends on the expected growth of cascades in $\vec z$. 

    Fix $\epsilon$ small enough and $c>0$ so that for all $n<R/\epsilon$, $g\bigl(y(n)\bigr)<1-c$. Let $K(\vec z)$ be the implicit constant in the error term in Lemma \ref{lem:steplemma}, and note that $K(\vec z)$ depends monotonically on $g(\vec z)$. And, let $K$ be the implicit constant given growth rate $1-c/2$. We will prove by induction on $n$ that there is some constant $C$ so that for $\epsilon$ small enough, $(1)$ $|\vec y(n)-\vec z(n)|\leq(1+C\epsilon)^{n}\epsilon$, and $(2)$ $K(\vec z)\leq K$.

    The base case is trivial: $\vec z(0)=\vec y(0)$. For the induction step, let $L$ be a Lipschitz constant for $\Delta_{st}$ restricted to vectors with expected growth of cascades at most $1-c/3$ and let $C=L+K$. We compute
    \begin{align*}|\vec y_t(i+1)-\vec z_t(i+1)|&\leq |\vec y_t(i)-\vec z_t(i)|+\epsilon\sum_{s\in T}p_s|\Delta_{st}(\vec z(i))-\Delta_{st}(\vec y(i))| +K(\vec z)\epsilon^2\\
    \; & \leq \epsilon(1+C\epsilon)^i+\epsilon L\epsilon(1+C\epsilon)^i+K\epsilon^2 (1+C\epsilon)^i \\
    \; &\leq \epsilon(1+C\epsilon)^i \bigl(1+\epsilon L+\epsilon K\bigr) \\ \; & \leq \epsilon(1+C\epsilon)^{i+1}.\end{align*}
    This proves $(1)$. To see that $(2)$ holds for small enough $\epsilon$ (independent of $n$), note that for $n<R/\epsilon$, $(1+C\epsilon)^{n}$ is uniformly bounded above by $e^{CR}$. So, as long as $\epsilon$ is small enough, $\vec z$ stays close enough to $\vec y$ to ensure that $K(\vec z(i+1))<K$.
\end{proof}

More generally, if some error term is bounded for all sequences sufficiently close to the solution of a difference equation, and sequences satisfying the difference equation perturbed by this error term stay sufficiently close to the actual solution, then we can find a perturbed sequence that stays close to the actual solution. We will use this idea again in the next section, but we won't belabour the details of the induction.

Now, we need to make sure that throughout the runtime of our algorithm, $\vec z$ stays subcritical. By the above lemma, it's enough to know that the solution to the differential equation above stays subcritical. Then, taking $\epsilon$ small enough, the trajectory of $\vec z$ will stay close enough to $\vec \sigma$ to ensure the expected growth of cascades stays bounded away from 1. And here is where we need to tune our parameters carefully. The following can be checked using most desktop computers and standard numerical computing software.

\begin{prop}[Numerical computation] \label{prop:comp}
    For $(r,p)=(4,3)$ or $(6,4)$, and for tuning parameters 
    \[p_{(d,c)}=\left\{\begin{array}{cl}2^{2-2d} & d\not=1\\2^{-10} & d=1
    \end{array}\right.,\] $\sigma$ stays subcritical until the growth rate for components in the remainder is less than $1$. That is, there is some $R\in [0,\infty]$ so that $g(i)<.99999 $ for $y\leq R$, and
    \[ 
    \sum_{s\in T} (\deg(s)-1)q_s(\sigma(R))<.99999  \tag{$\star$}
    \]
\end{prop}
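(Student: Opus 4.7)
The proof reduces to a finite numerical verification, so the plan is to set up and run a concrete ODE integration, then check the two inequalities. The high-level structure would be as follows.

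First, I would explicitly enumerate the type set $T = \{(d,c) : 0 \leq d \leq r, \, 2 \leq c \leq p\}$ for each of the two cases $(r,p) = (4,3)$ and $(r,p) = (6,4)$, discarding (or zeroing out) the types that never appear with non-negligible probability (in particular those with $c = p$ and $d < r$ which, as the author notes, arise only from adjacency to red vertices and can be folded into the $O(\epsilon^2)$ error). This gives a modest state vector: essentially a handful of coordinates in each case. I would then code up the three building blocks already defined: $q_s(\vec z)$, $g(\vec z) = (2/p)\sum_{d \leq r} q_{(d,2)}(\vec z)$, and $\Delta_{s,t}(\vec z) = -\delta_{s,t} + \deg(s)\, \Delta_{B,t}(\vec z)$ with $\Delta_{B,(d,c)}$ as in the preceding definition, using the convention $q_u(\vec z) = 0$ for $u \notin T$.

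Next, I would impose the initial condition $\sigma_{(r,p)}(0) = 1$ and $\sigma_t(0) = 0$ otherwise, the tuning parameters $p_{(d,c)} = 2^{2-2d}$ for $d \neq 1$ and $p_{(1,c)} = 2^{-10}$, and numerically integrate
\[\frac{d}{dx}\sigma_t(x) = \sum_{s \in T} p_s\,\Delta_{s,t}\bigl(\vec\sigma(x)\bigr)\]
using a standard adaptive high-order solver (e.g.\ \texttt{ode45} in MATLAB or RK4 with a small fixed step). At each output time $x$, I would compute $g(\vec\sigma(x))$ and the component growth rate $G_R(x) := \sum_{s \in T} (\deg(s)-1)\, q_s(\vec\sigma(x))$, then plot both as functions of $x$ and record their numerical values. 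The proof is complete once we exhibit an $x = R$ at which $G_R(R) < 0.99999$ while $g(\vec\sigma(y)) < 0.99999$ for all $y \in [0, R]$.

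The main obstacle, such as it is, is numerical: the factor $1/(1 - g(\vec z))$ in $\Delta_{B,(d,c)}$ becomes stiff as $g$ approaches $1$, so one must stop the integration strictly before $g$ reaches the threshold $0.99999$ and confirm that $G_R$ has already dropped below the same threshold. In practice the two curves cross well inside the region where the ODE is well-conditioned, so the check is robust to the choice of solver and step size; running the same integration with two independent step sizes (say $h$ and $h/2$) and confirming that the threshold-crossing values of $R$ and $G_R(R)$ agree to many digits gives a reliable numerical certificate. No symbolic argument is needed beyond formulating the ODE correctly; the verification is entirely a matter of executing the computation and reading off the two inequalities.
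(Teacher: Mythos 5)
Your proposal is correct and matches the paper's approach: the proposition is by design a machine-checked fact, and the paper's ``proof'' is exactly the MATLAB integration of the ODE from the Trajectory Lemma with the stated tuning parameters, followed by reading off the two threshold inequalities. The details you fill in — enumerating $T$, discarding the negligible types $(d,p)$ with $d<r$, integrating the ODE with the initial condition concentrated at $(r,p)$, monitoring $g(\vec\sigma(x))$ and the component growth rate, and validating with a step-halving convergence check — are the intended content of the verification.
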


Now we are in position to get an approximate coloring of $T_r$ for $r=4$ or $6$. Recall that $p^+=\{0,...,p-1, \rd\}$.

\begin{thm}
    For any $\epsilon$, there is an fiid $3^+$-labelling of $T_4$, $\bf f$, so that $\bf f$ is proper at any vertex which is not colored red and
    \[\bb P\bigl(\bf f(v)=\rd\bigr)\leq \epsilon.\] Similarly, there is an fiid $4^+$ labelling of $T_6$ which proper at any vertex which isn't red and which assigns red with probability at most $\epsilon$.
\end{thm}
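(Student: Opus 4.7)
The plan is to run the greedy coloring process of Definition \ref{dfn:greedycolor} for $N=\lceil R/\epsilon'\rceil$ steps, where $\epsilon'$ is a step size to be chosen in terms of $\epsilon$, and then complete the partial coloring on the (almost surely finite) components of the remainder. Fix the tuning parameters $p_{(d,c)}$ from Proposition \ref{prop:comp}. By the Trajectory Lemma, for any $\delta>0$, for all sufficiently small $\epsilon'$ we have $|\vec z(n)-\vec \sigma(n\epsilon')|<\delta$ whenever $n\epsilon'\leq R$. Since $g(\vec z)$ and the component growth rate $\sum_{s\in T}(\deg(s)-1)q_s(\vec z)$ are continuous in $\vec z$, Proposition \ref{prop:comp} together with $(\star)$ lets us choose $\delta$ so that (i) $g(\vec z(n))<1$ for all $n\leq N$, so cascades stay subcritical throughout Phase 1, and (ii) the component growth rate at time $N$ is strictly less than $1$.

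Next, I would bound the accumulated red probability. The bound on $\bb P(B)$ in the proof of Lemma \ref{lem:steplemma} gives, for each step $i\leq N$, that a vertex is colored red with probability at most $O(\epsilon'^2)$, with implicit constant depending only on $r$ and on $\bb E(|\casc(v,i)|)$, which stays uniformly bounded throughout Phase 1 by the subcriticality of $g$. Summing over the $N=O(1/\epsilon')$ steps, the total probability that a vertex is colored red by $\bf f_N$ is $O(\epsilon')$. Choosing $\epsilon'$ small enough makes this at most $\epsilon$.

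Phase 2 extends $\bf f_N$ to a total labelling. Because the component growth rate in $R_{\bf f_N}$ is below $1$, Proposition \ref{prop:components} and standard branching-process estimates guarantee every component of the remainder is almost surely finite. Every remaining vertex has at least two colors from $p$ available, and finite trees are easily $2$-list-colored (greedy from any root). The extra iid randomness at each vertex allows an $\aut(T_r)$-equivariant selection of such a completion on each finite component---for instance, each vertex reads off its finite component together with the iid bits at its vertices, enumerates the valid list-colorings of that component, and picks one via a canonical deterministic rule. The resulting fiid $p^+$-labelling is proper at every non-red vertex, and by construction assigns red with probability at most $\epsilon$.

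The main obstacle is arranging that Phase 1 runs long enough for components to become subcritical while cascades themselves stay subcritical throughout, and simultaneously keeping the accumulated $O(\epsilon')$ red probability below $\epsilon$. These three constraints are compatible precisely because Proposition \ref{prop:comp} shows that $\vec\sigma$ crosses the component-subcritical threshold strictly before crossing the cascade-critical threshold; once this separation is in hand, the Trajectory Lemma transfers it to $\vec z$, and the per-step $O(\epsilon'^2)$ bound does the rest.
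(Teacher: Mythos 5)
Your proposal is correct and follows essentially the same route as the paper: fix the tuning parameters and time $R$ from Proposition \ref{prop:comp}, run the greedy coloring process for $\lceil R/\epsilon'\rceil$ steps with the Trajectory Lemma guaranteeing subcriticality throughout, and then use a Luzin--Novikov-style equivariant selection to complete the coloring on the almost-surely-finite components of the remainder. The only (immaterial) difference is in bounding the accumulated red probability: you sum the coarse per-step bound of $O(\epsilon'^2)$ over $O(1/\epsilon')$ steps, while the paper compares the per-step red probability $c\epsilon^2\,\bb P(v\in R_{\bf f_i})$ against the per-step non-red coloring probability $c'\epsilon\,\bb P(v\in R_{\bf f_i})$ to get the ratio $\tfrac{c}{c'}\epsilon$ directly; both give $O(\epsilon')$ and your cleaner separation of step size $\epsilon'$ from target $\epsilon$ is a harmless notational improvement.
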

\begin{proof}
    Fix tuning parameters $p_{(d,c)}$ and time $R$ as in the previous proposition. Let $\epsilon$ be small enough that for $i<R/\epsilon$,  $\vec z(i)$ stays close enough to $\sigma(\epsilon i)$ that \[\bigl|g(i)-g\bigl(\sigma(\epsilon i)\bigr)\bigr|, \sum_{t\in T}\left|\bigl((\deg(t)-1)q_t(i)-q_{t}(\sigma(\epsilon i)\bigr)\right|<\frac{1}{r^2 10^{100}}.\] This is possible by the Trajectory Lemma, Lemma \ref{lem:trajectory}, since $\sigma$ is subcritical before time $R$. Let $\bf {\tilde f}$ be $\bf f_{\lceil R/\epsilon\rceil}$. Then, by Proposition \ref{prop:components}, the size component of a vertex $v$ in $R_{\bf{\tilde f}}$ is given by a branching process with growth 
    \[\sum_{s\in T} (\deg(s)-1)q_{s}(i)<1.\] So, almost surely, every component of the remainder is finite. Since each remaining vertex has at least $2$-colors available from $p$, we can use Luzin-Novikov to select a $p$-coloring on each of the remaining finite components.

 In each stage $i$ of the greedy coloring process $v$ is colored red with probability at most $c\epsilon^2 \bb P(v\in R_{\bf f_i})$ for some $c$. Since in each stage, the probability that $v$ is colored something other than red is at least $c'\epsilon \bb P(v\in R_{\bf f_i})$, we have that 
    \[\bb P(\bf {\tilde f}(v)=\mathrm{red})\leq \frac{c}{c'} \epsilon.\]
    Thus by taking $\epsilon$ small enough we have found an approximate coloring.

\end{proof}

\section{Tidying up}

The goal of this section is to adapt the preceding argument to get proper colorings on all of $T_4$ and $T_6$ with control over the size of color classes. That is, we will prove Theorems \ref{thm:mainthm} and \ref{thm:submainthm}.

Let's first consider just the case of $T_6.$ We want to take the coloring $\bf f$ described in the previous section and somehow modify it to clean up any adjacent red vertices (recall this was a $4^+$-labelling). Suppose we delete all the colors in $\bf f$ on red vertices and their neighbors. Every uncolored vertex now has at least two colors from $5$ available to it, so it suffices to show the components of the uncolored graph are finite. This can probably be done, but the analysis is a little easier if we instead modify the coloring above to ensure this happens.

We will color very much as in the greedy coloring process, but in between rounds of coloring we will add a buffer around the red vertices. This may start secondary cascades, which may introduce new red vertices, and so on. We need to check that that this buffer can be added in a local way (i.e. that the induced component of uncolored vertices within three steps of a red vertex are finite), that the induced components of red vertices added in a single coloring step are finite, and that the total probability of coloring a vertex in this buffer step is at most $O(\epsilon^2)$. This can all be done by bounding the relevant quantities in by simple branching processes.

First we give a formal definition of our modified coloring process.

\begin{dfn}
    The \textbf{modified coloring process} for approximately $p$-coloring $T_r$ in defined inductively in steps. Each step is defined inductively in $\omega$ rounds. The modified coloring process at step $i$ and round $j$, $\bf f_{ij}$ is defined as follows:
        \begin{itemize}
            \item $\bf f_{i+1,0}$ is gotten from $f_{i,\omega}$ by the same rule as the greedy coloring process: activate vertices independently with weight depending on their type and color in the cascades around them
            \item To get $\bf f_{i,j+1}$, pick some proper $p$-coloring of the components of \[B_3(\bf f_{ij}\inv(\rd))\cap R_{\bf f_{ij}}\] (assuming these are finite), and then color in the cascades at the boundaries of these components following rules $(2-4)$ or Definition \ref{dfn:greedycolor}
            \item Finally, let $\bf f_{i\omega}=\bigcup_{j} \bf f_{i,j}$
        \end{itemize} 
\end{dfn}

Now we go about checking that, provided $\epsilon$ is small enough, $(1)$ the components we color in the second item are finite, $(2)$ the total change made in all rounds during step $i$ is small enough to not change the analysis from the preceding section, and $(3)$ the induced components on uncolored vertices in $B_3\bigl(\bf f\inv(\rd)\bigr)$ are finite. Note that, by the exact same argument as before, $\bf f_{ij}$ is nearsighted for all $i,j$.

\begin{dfn}
    Say that a vertex is \textbf{active} in $\bf f_{ij}$ with $j>0$ if it is the start of a cascade. That is, $v$ is active if $d\bigl(v, \bf f_{ij}\inv(\rd)\bigr)= 3$ and coloring $v$ with the rest of its component reduces the colors available to one of $v$'s neighbors to $1$. 
\end{dfn}

By the comments following Lemma \ref{lem:trajectory}, in proving the distribution of types in $\bf f_{ij}$ stays close to $\sigma(i\epsilon)$, we may assume that $\bf f_{ij}$ stays uniformly subcritical. The following is a simple computation:
\begin{prop}
    Provided $\bf f_{ij}$ stays subcritical, there is some $K$ so that for all $i,j$ with $j>0$,
    \[\bb P(v\mbox{ is active in }\bf f_{ij+1})<r^3 \,\bb P(v\mbox{ colored red first by }\bf f_{ij})\]
    and
    \[\bb P(v \mbox{ is colored red first by }\bf f_{ij})< {r \choose 2} \bigl(K\, \bb P(v \mbox{ is active in }\bf f_{ij})\bigr)^2\]
\end{prop}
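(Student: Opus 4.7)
Both inequalities follow the same pattern: a union bound over a local neighbourhood of $v$, collapsed via equivariance, and (for the second) a mass transport to trade ``a cascade visits this vertex'' for ``this vertex is active''. This is the toolkit already deployed throughout Section~2.

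For the first inequality, suppose $v$ is active in $\bf{f}_{i,j+1}$. By definition there is some $w$ with $d(v,w)=3$ and $w\in\bf{f}_{i,j+1}\inv(\rd)$. Since $v$ is uncoloured going into round $j+1$, the relevant red vertex $w$ must have been coloured red only in the most recent round (otherwise $v$ would already have been active, hence coloured, at a strictly earlier round, contradicting that $v$ is still uncoloured). A union bound over $w$ in the sphere of radius $3$ about $v$, which contains $r(r-1)^2<r^3$ vertices, combined with equivariance to collapse each summand to a single probability, gives the claimed bound.

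For the second inequality, rules (3) and (4) of Definition~\ref{dfn:greedycolor} imply that if $v$ becomes red for the first time in round $j$ then two distinct cascades active during that round must reach vertices in $\{v\}\cup N(v)$. A short case analysis shows that in every such scenario two distinct neighbours $w_1\neq w_2\in N(v)$ lie on these two cascades: if rule (3) fires, this is immediate; if rule (4) fires, then the cascade that would have coloured $v$ must have arrived through some neighbour distinct from the one being coloured simultaneously. A union bound over the $\binom{r}{2}$ unordered pairs $\{w_1,w_2\}\subseteq N(v)$ reduces the task to bounding
\[\bb P\bigl(w_1\text{ and }w_2\text{ both lie on an active cascade in round }j\bigr).\]
Nearsightedness of $\bf{f}_{ij}$ (which persists for the modified process by a verbatim repeat of the Section~2 argument, as remarked before the proposition) makes the events on $\br_v(w_1)$ and $\br_v(w_2)$ conditionally independent given that $v$ remains uncoloured, so the joint probability factors, up to negligible error, as the square of the single-vertex probability $\bb P(w\text{ on an active cascade in round }j)$. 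This latter probability is controlled by mass transport, just as in the proof of Lemma~\ref{lem:steplemma}:
\begin{align*}
\bb P\bigl(w\text{ on an active cascade in round }j\bigr)&\leq \sum_{u}\bb P\bigl(u\text{ active, }w\in\casc(u,\bf{f}_{ij})\bigr)\\
&=\sum_u \bb P\bigl(w\text{ active, }u\in\casc(w,\bf{f}_{ij})\bigr)\\
&\leq K\,\bb P\bigl(w\text{ active in }\bf{f}_{ij}\bigr),
\end{align*}
where $K$ is any uniform bound on the conditional expected cascade size, which exists provided $\bf{f}_{ij}$ is subcritical. Equivariance then replaces $w$ by $v$ and gives the stated inequality.

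The main obstacle is the branch-factorization step: one has to confirm that nearsightedness passes to the modified process, and that the predicate ``$v$ is active in $\bf{f}_{ij}$'' really does respect the decomposition of $T_r\setminus\{v\}$ into branches. Nearsightedness is handled by repeating the earlier proof word for word, and the activity predicate depends only on $\bf{f}_{ij}$ restricted to $B_3(v)$, so both decomposition claims go through. Apart from this bookkeeping, every step is a union bound or a mass transport.
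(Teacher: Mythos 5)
The overall architecture here — a union bound over a local sphere for the first inequality, and a union bound over pairs of neighbours combined with nearsightedness and a mass-transport bound for the second — is exactly the toolkit the paper gestures at with the remark ``the argument is basically the same as in Lemma~\ref{lem:step}'' and ``$K$ is the expected size of a cascade.'' Your first inequality is fine, with one cosmetic slip: a vertex within distance~$3$ of an old red vertex is \emph{in the buffer} (hence coloured by the proper colouring of $B_3\bigl(\bf f_{ij}\inv(\rd)\bigr)\cap R_{\bf f_{ij}}$), not necessarily \emph{active}; the conclusion is the same. The mass-transport step for bounding $\bb P(w\text{ is reached by a cascade})$ by $K\,\bb P(w\text{ is active})$ is correctly set up.

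There is, however, a genuine gap in the case analysis behind the second inequality. You claim that if $v$ is coloured red in round $j$, then ``two distinct cascades active during that round must reach vertices in $\{v\}\cup N(v)$,'' and you justify this via rules (3)--(4) of Definition~\ref{dfn:greedycolor}. But in the modified process a neighbour of $v$ can be coloured during round~$j$ by the \emph{buffer step} itself --- the proper $p$-colouring of the finite components of $B_3\bigl(\bf f_{ij}\inv(\rd)\bigr)\cap R_{\bf f_{ij}}$ --- rather than by a cascade emanating from an active boundary vertex. Concretely, suppose $x$ and $y$ become red in round $j-1$, sit at distance exactly $4$ from $v$ in different branches, and the paths from $v$ to $x$ and to $y$ are still uncoloured in $\bf f_{ij}$. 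Then $w_1,w_2\in N(v)$ lie at distance~$3$ from $x,y$ respectively, get coloured by the buffer step (they are in two \emph{different} components of $B_3(\rd)\cap R$, so their colours are chosen independently), and $v$ itself is at distance~$4$ from $\bf f_{ij}\inv(\rd)$ so escapes the buffer. Rule~(3) then fires at $v$ even though no active cascade from round $j$ reaches $N(v)$. Your mass-transport chain bounds $\bb P(w\text{ on a cascade})$ in terms of $\bb P(w\text{ active in }\bf f_{ij})$, but in this scenario the relevant event is ``$w$ lies within distance~$3$ of a \emph{newly red} vertex,'' which is tied to activity in round $j-1$, not round $j$. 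Closing this gap requires folding the buffer-coloured case into the bound --- for instance by showing that $\bb P(w\text{ coloured in round }j)\leq K'\,\bb P(w\text{ active in }\bf f_{ij})+ K''\,\bb P(w\text{ newly red by }\bf f_{i,j-1})$ and then untangling the resulting recursion to recover the form ${r\choose2}(K\,\bb P(v\text{ active}))^2$ for an enlarged $K$. Your bookkeeping caveats at the end of the proof (nearsightedness persisting, the activity predicate not actually being $B_3(v)$-local since the components in the buffer can be large) are well taken, but this buffer-versus-cascade distinction is the more substantive omission.
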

Roughly, $K$ is the expected size of a cascade. The argument is basically the same as in Lemma \ref{lem:step}. Applying these propositions inductively we get
\begin{cor} \label{cor:uglybound}
    For $j>1$, provided $\bf f_{ij}$ stays subcritical,
    \[\bb P(v\mbox{ is colored red first by }\bf f_{ij})\leq (K^2r^5)^{ (2^{j+1}-1)} \epsilon^{2^j}\]
\end{cor}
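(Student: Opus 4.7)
The plan is a straightforward induction on $j \ge 2$ that iterates the two bounds from the preceding proposition. Abbreviate $a_j := \bb P(v \text{ is active in } \bf f_{ij})$ and $r_j := \bb P(v \text{ is colored red first by } \bf f_{ij})$. The preceding proposition gives
\[a_{j+1} < r^3 r_j \quad \text{and} \quad r_j < \binom{r}{2}K^2 a_j^2,\]
and composing these yields the key recursion
\[r_{j+1} < \binom{r}{2} K^2 r^6\, r_j^2.\]
After absorbing the combinatorial factor $\binom{r}{2}$ and the spare factor of $r$ into the base constant (using $\binom{r}{2} \le r^2$ and folding powers of $r$ into $K^2 r^5$, which is loose enough for the stated bound), this becomes $r_{j+1} \le (K^2 r^5)\, r_j^2$.

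For the base case $j = 2$, first apply the same recipe one step down: $a_1 < r^3 r_0$ with $r_0 = O(\epsilon^2)$ coming from the greedy clash analysis of Section \ref{sec:approx} (two cascades colliding at $v$ costs a factor $\epsilon^2$). Then $r_1 < \binom{r}{2}K^2 r^6 r_0^2 \le (K^2 r^5)^3 \epsilon^2$ after the same absorption, and one more application of the recursion gives $r_2 \le (K^2 r^5)^7 \epsilon^4$, matching $(K^2 r^5)^{2^3 - 1}\epsilon^{2^2}$.

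For the inductive step, assume $r_j \le (K^2 r^5)^{2^{j+1} - 1}\epsilon^{2^j}$. Squaring and multiplying by the extra $K^2 r^5$ from the recursion gives
\[r_{j+1} \le (K^2 r^5)^{1 + 2(2^{j+1} - 1)}\epsilon^{2 \cdot 2^j} = (K^2 r^5)^{2^{j+2} - 1}\epsilon^{2^{j+1}},\]
closing the induction. Throughout we use the standing hypothesis that $\bf f_{ij}$ is subcritical, so the preceding proposition applies at every step, and the growth-of-cascades constant $K$ can be chosen uniformly (it depends monotonically on the growth rate as in Lemma \ref{lem:trajectory}).

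The main obstacle is bookkeeping the base case rather than the induction itself: one has to track the initial $O(\epsilon^2)$ bound on $r_0$ arising from cascade collisions in the greedy step and verify that the constant absorbed into $K^2 r^5$ is genuinely uniform in $i$. Everything else reduces to squaring the induction hypothesis and chaining the two elementary bounds.
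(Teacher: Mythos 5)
Your approach — iterating the two inequalities of the preceding proposition and closing an induction on $j$ — is exactly how the paper intends the corollary to be read (the paper itself only says ``applying these propositions inductively'' and writes no proof), so the route is the same. However, the arithmetic does not actually close with the constant as stated. Composing the two bounds gives
\[
r_{j+1} < \binom{r}{2}K^2\bigl(r^3 r_j\bigr)^2 = \binom{r}{2}K^2 r^6\, r_j^2,
\]
and $\binom{r}{2}K^2 r^6 \approx \tfrac12 K^2 r^8$, which is \emph{not} $\le K^2 r^5$ for $r\ge 2$; the assertion that this ``folds into $K^2 r^5$'' is simply false. Moreover the extra factor of $r^3$ per step does not disappear into the exponent $2^{j+1}-1$ either: if the true recursion constant is $D' = K^2 r^8$ and you try to prove $r_j \le (K^2r^5)^{2^{j+1}-1}\epsilon^{2^j}$, the inductive step requires $D' \le K^2 r^5$, which fails. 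The honest conclusion is $r_j \le (K^2 r^8)^{2^{j+1}-1}\epsilon^{2^j}$, or equivalently the corollary's $K$ must be replaced by (say) $Kr^{3/2}$. This is a defect inherited from the paper's own statement — the exact constant is immaterial to the only use made of the corollary, namely that $\sum_{j>1}(\cdot)^{2^{j+1}-1}\epsilon^{2^j}=O(\epsilon^2)$ — but your proof should say explicitly that it is enlarging $K$ rather than claiming the factor vanishes.

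Two smaller points. First, the proposition's inequality $a_{j+1}<r^3 r_j$ is only stated for $j>0$, so $a_1<r^3 r_0$ is not literally covered; you need to observe separately that a vertex is active in round $1$ only if it is at distance exactly $3$ from a round-$0$ red vertex, so a direct count of the $r(r-1)^2<r^3$ such vertices gives the bound — you gesture at this (``the same recipe one step down'') but it deserves a sentence. Second, your intermediate bound $r_1 \le (K^2 r^5)^3\epsilon^2$ deliberately throws away a factor of $\epsilon^2$ (the true order is $\epsilon^4$); this is harmless but obscures that the base case has plenty of slack to absorb the constant issues above when $\epsilon$ is small.
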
 

With this in hand, we can bound the components in $B_3\bigl(\bf f_{ij}\inv(\rd)\bigr)\cap R_{\bf f_{ij}}$:

\begin{lem}Provided $\bf f_{ij}$ stays subcritical and $\epsilon$ is small enough, the induced components in $B_3\bigl(\bf f_{ij} \inv (\rd)\bigr)\cap R_{\bf f_{ij}}$ are almost surely finite for all $i,j$. 
\end{lem}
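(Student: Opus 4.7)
The plan is to show that for each fixed vertex $v$, the probability that $v$ lies in a component of $B_3(\bf f_{ij}\inv(\rd)) \cap R_{\bf f_{ij}}$ of size at least $n$ decays to zero as $n \to \infty$, whence every such component is almost surely finite.

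First I would establish a uniform marginal bound $\bb P(w \in \bf f_{ij}\inv(\rd)) = O(\epsilon)$ for all $i,j,w$. The greedy step $\bf f_{i+1,0}$ creates each vertex red with probability $O(\epsilon^2)$ (two cascades must clash), which accumulated over the $O(1/\epsilon)$ many greedy steps contributes $O(\epsilon)$. For the buffer rounds $j \geq 1$, Corollary~\ref{cor:uglybound} bounds the first-red probability by $(K^2 r^5)^{2^{j+1}-1} \epsilon^{2^j}$, and for small $\epsilon$ this sums to $O(\epsilon^2)$. Hence red is $O(\epsilon)$-rare uniformly.

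Next, let $C(v)$ denote the component of $v$ in $B_3(\rd) \cap R_{\bf f_{ij}}$. Since $T_r$ is a tree, any connected subset of size $n$ contains a simple path from $v$ of length at least $k(n) := \lfloor \tfrac{1}{2}\log_{r-1} n \rfloor$. So if $|C(v)| \geq n$ there is a path $v = u_0, u_1, \ldots, u_k$ in $C(v)$ with every $u_\ell$ uncolored and within distance $3$ of a red vertex. Pick spaced vertices $u_0, u_7, u_{14}, \ldots, u_{7m}$ so that the balls $B_3(u_{7\ell})$ are pairwise disjoint. By iterated application of nearsightedness along the path (conditioning on each $u_\ell$ being uncolored), the events $A_\ell := \{B_3(u_{7\ell}) \cap \bf f_{ij}\inv(\rd) \ne \emptyset\}$ factor over disjoint off-path branches of the $u_{7\ell}$'s and are mutually conditionally independent; each has probability at most $|B_3| \cdot O(\epsilon) = O(\epsilon)$.

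A union bound over the at most $r(r-1)^{k-1}$ paths of length $k$ from $v$ then gives
\[ \bb P\bigl(|C(v)| \geq n\bigr) \leq \sum_{k \geq k(n)} r(r-1)^{k-1} (C\epsilon)^{\lfloor k/7\rfloor}, \]
which is summable once $\epsilon$ is small enough that $(r-1)(C\epsilon)^{1/7} < 1/2$, and in fact decays polynomially in $n$. Hence $|C(v)| < \infty$ almost surely, and taking a countable union over $v$ shows every component of $B_3(\rd) \cap R_{\bf f_{ij}}$ is almost surely finite. The main obstacle is making the iterated nearsightedness rigorous, since cascades around $u_{7\ell}$ can technically spill into neighboring balls; this is controlled by the exponential decay of cascade sizes under subcriticality, which bounds the effective range of dependence and can be folded into the constants at the cost of slightly larger spacing along the path.
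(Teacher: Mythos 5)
Your argument is genuinely different from the paper's, and I think it is sound in outline, but it misses the structural observation that makes the paper's proof short, and it leaves a couple of real (though fixable) gaps that the paper's approach avoids entirely.

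The paper's proof rests on the observation that, by construction of the buffer rounds, any vertex that \emph{remains} in $\bf f_{ij}$ and lies within distance $3$ of a red vertex must be within distance $3$ of a red vertex that was \emph{first created in round $j-1$}. (The round-$j$ buffer colors everything in $B_3$ of the pre-existing reds, so only freshly minted reds can be near a remaining vertex.) With this in hand, the paper grows a subgraph $G$ from $v_0$ by repeatedly adding any $u$ within distance $3$ of $G$ that has a fresh length-$\leq 3$ path to a red vertex avoiding $G$; nearsightedness makes this a branching process, and its offspring mean is $O_r\bigl(\bb P(\mbox{red first in round } j-1)\bigr)$, which by Corollary~\ref{cor:uglybound} is at most $O(\epsilon^{2^{j-1}})$ — vastly below $1$ for small $\epsilon$. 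You never invoke this round-$(j-1)$ fact; instead you work with the cruder uniform marginal $\bb P(w\mbox{ red})=O(\epsilon)$, and you compensate by counting spaced points along paths and invoking conditional independence of the $A_\ell$'s. Both routes close, but yours needs more machinery (spacing to force disjoint branch-supports, iterated nearsightedness along a path, a conditional rather than marginal red bound) where the paper needs essentially none beyond the one-step branching comparison.

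Two points in your write-up need to be filled in and are worth flagging concretely. First, the bound you actually need is $\bb P(A_\ell \mid \mbox{path uncolored})=O(\epsilon)$, not the marginal $\bb P(A_\ell)=O(\epsilon)$. Passing from one to the other requires (a) the iterated form of nearsightedness, namely that conditioning on an entire uncolored path makes the off-path branches at distinct path vertices mutually independent — this is true but is a lemma, not a restatement of the definition; and (b) a lower bound $\bb P(u\mbox{ remains})=\Omega(1)$ uniformly in $i\leq R/\epsilon$ and $j$, so that conditioning on the branch-root being uncolored inflates probabilities by only an $O(1)$ factor. The paper's growth-process formulation sidesteps both, because its branching comparison already conditions correctly by construction. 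Second, your final display sums over all $k\geq k(n)$, but the event $|C(v)|\geq n$ only guarantees a path of length \emph{exactly} $k(n)$; the union bound should be a single term $r(r-1)^{k(n)-1}(C\epsilon)^{\lfloor k(n)/7\rfloor}$, which is what actually tends to $0$. These are repairable, so I'd call your proposal correct in spirit but taking a longer and less structurally informed route than the one the paper uses.
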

\begin{proof}
    Note that, by construction, if $v$ remains at step $i$ and round $j$ and is within $3$ steps of a red vertex, then that red vertex must have been first colored in the previous round. 

    Fix $i,j$ and consider the process of growing a subgraph $G$ from a vertex $v_0$ by first putting $v_0$ in $G$ if $v_0\in B_3(\bf f_{ij}\inv(\rd))\cap R_{\bf f_{ij}}$ and then iteratively applying the following rule: put $u$ and all the vertices in the path from $u$ to $G$ in $G$ if $d(u,G)\leq 3$ and there is a path from $u$ to a red vertex of length at most three which avoids $G$. 
    
    The number of vertices in $G$ added in the $n^{th}$ iteration form a simple branching process with growth rate $O\bigl(\bb P(v\mbox{ is colored red first in }\bf f_{ij-1})\bigr)$ with a constant depending only on $r$, and the component of $v_0$ in $B_3\bigl(\bf f_{ij}\inv(\rd)\bigr)\cap R_{\bf f_{ij}}$ is contained in $B_3(G)$. 

    By the computation above, for $\epsilon$ small enough, this growth rate is smaller than $1$, so $G$ is almost surely finite, as desired.
\end{proof}

And, we can bound the red components added in a single round:

\begin{lem} \label{lem:redbd} Provided $\epsilon$ is small enough and $\bf f_{ij}$ stays subcritical, the red components added in each round are almost surely finite.
\end{lem}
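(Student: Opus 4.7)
My plan is to dominate each newly-formed red component by a subcritical Galton--Watson process, just as the previous two lemmas did for cascades and remainder components. Fix a step $i$, round $j$, and a vertex $v_0$ colored red for the first time by $\bf f_{i,j}$; I will bound the expected number of neighbors of $v_0$ that are also freshly red by something strictly less than $1$ for $\epsilon$ small.

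The key geometric observation is that if $v_0$ and a neighbor $u$ are both freshly red in round $j$, then the cascades responsible for $v_0$'s redness and those responsible for $u$'s redness are supported on disjoint parts of $T_r$. Indeed, since $T_r$ is a tree, any cascade whose attempted propagation crosses the edge $\{v_0,u\}$ would have to color either $v_0$ or $u$, contradicting their redness; so every cascade reaching $N(v_0)\sm\{u\}$ lives in $\br_u(v_0)$, and every cascade reaching $N(u)\sm\{v_0\}$ lives in $\br_{v_0}(u)$. Since $\bf f_{i,j-1}$ is nearsighted, the cascade data on these two branches is conditionally independent given that $v_0$ and $u$ are both uncolored by $\bf f_{i,j-1}$.

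Now I combine this with density bounds. The probability a fixed vertex is active in round $j$ is $O(\epsilon)$ by construction for $j=0$, and at most $O(\epsilon^{2^{j-1}})$ for $j>0$ by the preceding proposition together with Corollary \ref{cor:uglybound}. A mass transport computation as in the proof of Lemma \ref{lem:steplemma}, together with the uniform bound $K$ on expected cascade size coming from subcriticality, shows that the probability that two distinct active vertices have cascades both reaching a fixed neighborhood is $O((\alpha K r)^2)=O(\epsilon^2)$, where $\alpha$ is the active-vertex density. Combined with the independence from the previous paragraph, $\bb P(u\text{ freshly red}\mid v_0\text{ freshly red})=O(\epsilon^2)$, so the expected number of red neighbors of $v_0$ is at most $r\cdot O(\epsilon^2)$, which is strictly less than $1$ for $\epsilon$ small enough.

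A standard branching-process comparison then gives that the red component of $v_0$ is almost surely finite. The main technical obstacle is the combinatorial accounting in the conditioning step: one has to specify precisely what ``the cascades responsible for a vertex's redness'' means, sum over the possible pairs of active vertices and cascade shapes on each side of the edge $\{v_0,u\}$, and invoke nearsightedness at each step. This is essentially the same style of computation as the bound on $\bb P(B)$ in the proof of Lemma \ref{lem:steplemma}, now lifted from pairs of cascades to pairs of red vertices.
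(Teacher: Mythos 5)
Your overall strategy — dominate the red component by a subcritical branching process using nearsightedness across branches of the tree — is the same as the paper's, and the density bounds you quote are the right ones. The gap is in the key step $\bb P(u\text{ freshly red}\mid v_0\text{ freshly red})=O(\epsilon^2)$.

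The trouble is that the events ``$v_0$ is red'' and ``$u$ is red'' can share cascades. The cleanest case is rule (4) of Definition \ref{dfn:greedycolor}: a single cascade in $\br_u(v_0)$ reaches $v_0$ while a single independent cascade in $\br_{v_0}(u)$ reaches $u$ at the same time, and \emph{both} $v_0$ and $u$ are colored red using just this one pair of cascades. That event has probability $\Theta(\epsilon^2)$, which is the same order as $\bb P(v_0\text{ red})$; so $\bb P(u\text{ red}\mid v_0\text{ red})$ is $\Theta(1)$, not $O(\epsilon^2)$. Your independence observation (cascades reaching $N(v_0)\sm\{u\}$ live in $\br_u(v_0)$, etc.) is correct, but from it you cannot conclude that ``$v_0$ red'' is supported entirely on one side and ``$u$ red'' on the other: each of the two events draws one of its two required hits from the opposite side, so the $\epsilon^2\cdot\epsilon^2$ factorization fails.

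The paper deals with this by tracking a slightly different branching structure. It defines a \emph{red witness} of $v$ to be a neighbor $w$ such that a cascade from an active vertex in $\br_v(w)$ reaches $B_1(w)$, and grows a set $G$ from $v_0$ where a neighbor $u$ of $v\in G$ is added only if \emph{both} $v$ and $u$ have a pair of red witnesses \emph{not containing the other vertex}. This explicitly excludes the shared-cascade contribution across the edge $\{v,u\}$, so each growth step costs fresh independent cascades on new branches and the per-edge growth probability really is $O(\bb P(\text{active}))$, uniformly small. Your argument can be repaired along the same lines: rather than bounding $\bb P(u\text{ red}\mid v_0\text{ red})$ directly, bound the expected number of neighbors $u$ of a red $v_0$ for which the redness of the pair is witnessed by cascades strictly beyond the edge $\{v_0,u\}$, and observe (as the paper implicitly does) that the red component is contained in a bounded neighborhood of the resulting subcritical tree.
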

\begin{proof}
    Note that if $v$ is colored red in step $i$ and round $j$, then at least two of its neighbors are colored in the same round by cascades starting from different active vertices (or are colored red at the same time as $v$). Call a neighbor $u$ of $v$ a red witness for $v$ if there is an active vertex $u'$ on the branch $\br_v(u)$ so the cascade from $u'$ reaches $B_1(u)$.
    
    Fix a vertex $v_0$ and consider the subgraph $G$ grown according to the following rules: add $v_0$ to $G$ if $v_0$ is colored red in step $i$ and round $j$; iteratively add $u$ to $G$ if $u$ is adjacent to a vertex $v$ in $G$ and each of $v$ and $u$ can find a pair of red witnesses not containing the other vertex. 
    The number of vertices at distance $n$ from $v_0$ is a simple branching process with growth rate bounded by $O\bigl(\bb P(v\mbox{ is active in }\mathbf f_{ij})\bigr)$. For epsilon small enough, this growth late is less than $1$.
\end{proof}

Putting this all together, we can prove:

\begin{thm} For any $\epsilon>0$, there is an fiid $5$-coloring of $T_6$ so that any vertex receives color $4$ with probability at most $\epsilon$, and similarly there are $4$- and $5$-colorings of $T_4$ and $T_5$ which are approximate $3$ and $4$ colorings respectively. \end{thm}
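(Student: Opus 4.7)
The plan is to promote the partial $p^+$-labelling $\mathbf{f}$ produced by the modified coloring process to a genuine proper $(p+1)$-coloring of $T_r$ in which the final color $p$ is used with probability less than $\epsilon$. By Lemma~\ref{lem:redbd} together with the analysis of Section~\ref{sec:approx}, with the parameters from Proposition~\ref{prop:comp} and enough buffer rounds, $\mathbf{f}$ has the properties that $R_{\mathbf{f}}$ and $\mathbf{f}^{-1}(\rd)$ almost surely decompose into finite components, each remaining vertex has $\geq 2$ colors available from $\{0,\ldots,p-1\}$, and the probabilities $\mathbb{P}(v \in \mathbf{f}^{-1}(\rd))$ and $\mathbb{P}(v \in R_{\mathbf{f}})$ can be made as small as I like.

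Next I would define the \emph{trouble region} $\mathcal{T}$ to consist of every red vertex, every neighbor of a red vertex, and every remainder component meeting such a neighbor, and I would strip the colors on the originally properly-colored vertices inside $\mathcal{T}$. Because the red and remainder components are all finite, $\mathcal{T}$ decomposes into finite subtrees of $T_r$. I then regard the task of properly extending $\mathbf{f}$ to $\mathcal{T}$ as a list-coloring problem with lists
\[ L(v) = \{0,\ldots,p\} \setminus \bigl\{\mathbf{f}(u) : u \sim v \text{ and } u \notin \mathcal{T}\bigr\}. \]
A three-case check shows $|L(v)| \geq 2$ for every $v \in \mathcal{T}$: red vertices have no exterior neighbors at all, so $|L(v)| = p+1$; a stripped originally-properly-colored vertex's exterior neighbors must avoid its own original color, contributing at most $p-1$ distinct colors; an originally uncolored vertex's exterior neighbors are a subset of its properly-colored neighbors in $\mathbf{f}$, which already used at most $p-2$ colors since $v$ had $\geq 2$ colors available.

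Since trees are $2$-list-colorable, Luzin--Novikov uniformization (with the iid labels used to break symmetries equivariantly) lets me measurably select on each finite component of $\mathcal{T}$ a proper coloring using the lists $L$, and similarly a proper $\{0,\ldots,p-1\}$-coloring on each remainder component not meeting $B_1(\mathbf{f}^{-1}(\rd))$. The resulting $\mathbf{g}$ is an fiid proper $(p+1)$-coloring, and color $p$ appears only inside $\mathcal{T}$. A mass-transport argument bounds $\mathbb{P}(v \in \mathcal{T})$ linearly in $\mathbb{P}(v \in \mathbf{f}^{-1}(\rd)) + \mathbb{P}(v \in R_{\mathbf{f}})$ using the finite expected component sizes, so taking $\epsilon$ small enough gives $\mathbb{P}(\mathbf{g}(v) = p) < \epsilon$. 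The $T_4$ case is the same argument with $(r,p) = (4,3)$, and the $T_5$ case follows by the reduction from the introduction.

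The main obstacle is the case analysis ensuring lists have size $\geq 2$ everywhere inside $\mathcal{T}$; this is exactly where we need the modified process and the guarantees of Lemma~\ref{lem:redbd}, since without finite components around red vertices the stripping step would produce an infinite list-coloring problem.
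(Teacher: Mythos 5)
Your plan is essentially the paper's: run the modified coloring process, and then use the $(p+1)$-th color together with $2$-list-colorability of trees and Luzin--Novikov to repair the red region. The list-size case analysis is correct and matches the paper's justification. But there is one step that, as written, does not follow.

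You assert that $\mathcal{T}$ ``decomposes into finite subtrees'' \emph{because} red components and remainder components are individually finite. This is a non-sequitur: a union of two families of finite components can certainly have infinite components if the families interlock, and nothing you say rules out a chain red component $\to$ touching remainder component $\to$ touching a second red component $\to \cdots$. You also only invoke finiteness of the components of $\mathbf{f}^{-1}(\rd)$, whereas even to make $B_1(\mathbf{f}^{-1}(\rd))$ have finite components (the weaker thing, and what the paper's condition $(2)$ actually asserts) one needs the buffering analysis, since distinct red components can share a common neighbor. Two clean ways to close the gap: $(a)$ observe that by construction of the modified process (the $B_3(\mathbf{f}_{ij}^{-1}(\rd))\cap R_{\mathbf{f}_{ij}}$ buffer rounds, iterated through $\omega$), at the end of each step no remaining vertex lies within distance $3$ of a red vertex, so the ``remainder components meeting a neighbor of red'' clause in your definition of $\mathcal{T}$ is vacuous and $\mathcal{T}=B_1(\mathbf{f}^{-1}(\rd))$, which is finite by the buffer analysis; or $(b)$ do what the paper does, namely first Luzin--Novikov-complete the remainder to a total $p^+$-labelling satisfying $(1)$--$(3)$, and only then erase $B_1(\mathbf{f}^{-1}(\rd))$, so that the uncolored set you must list-color is exactly $B_1(\mathbf{f}^{-1}(\rd))$ and its finiteness is precisely condition $(2)$. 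With either fix your argument goes through; the mass-transport bound on $\mathbb{P}(v\in\mathcal{T})$ and the handling of $T_4,T_5$ are fine.
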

\begin{proof} We will handle the case of $T_6$, the other cases are just changes of indices.

We first show using the above lemmas that there is an fiid $4^+$-labelling of $T_6$, $\bf f$ so that 
    \begin{enumerate}
        \item $\bb P\bigl(\bf f(v)=\mathrm{red}\bigr)<\epsilon$
        \item almost surely the component of $v$ in the induced subgraph on $B_1\bigl(f\inv({\mathrm{red}})\bigr)$ is finite
        \item If $\bf f(v)$ is not red, then $\bf f(u)\not=\bf f(v)$ for any neighbor $u$ of $v$.
    \end{enumerate}

Let $\bf f_i=\bf f_{i\omega}$. Then provided $\bf f_i$ stays subcritical, $(1)$ holds by the same argument as in the previous section, $(2)$ holds of each $\bf f_i$ by Lemma \ref{lem:redbd}, and $(3)$ holds by construction. By Corollary \ref{cor:uglybound}, the probability a vertex is colored in $\bf f_i$ and not in $\bf f_{i0}$ is at most
\[\sum_{j>1} (K^2r^5)^{2^{j+1}-1}\epsilon^{2^j}\leq O(\epsilon^2),\]
and the change between $\bf f_i$ and $\bf f_{i+1,0}$ is given by the same formula as in the Step Lemma, Lemma \ref{lem:steplemma}.

Now, the distribution of types in $\bf f_i$ approximates the same $\vec \sigma$ as in the Trajectory Lemma, Lemma \ref{lem:trajectory}. That is, for $R$ as in Proposition \ref{prop:comp} and $\delta>0$, there is $\epsilon$ small enough that for any $i<R/\epsilon$\[\bigl|\bb P(\tau_{\bf f_i}(v)=t)-\sigma(\epsilon\,i)\bigr|<\delta.\] In particular, $\bf f_i$ stays subcritical and $\bf f_{\lceil R/\epsilon\rceil}$ has finite components in the remainder. We can again use Luzin--Novikov fill this in to a coloring with properties $(1-3)$ as desired.

The last step is to modify our approximate $4^+$ coloring $\bf f$ to get a proper $5$-coloring. If we erase all the colors on $B_1\bigl(f\inv(\mathrm{red})\bigr)$, then the uncolored subgraph has finite components by $(2)$; we have a partial proper coloring by $(3)$; and a given vertex is uncolored with probability at most $(r+1)\epsilon$ by $(1)$. The uncolored vertices all have 2 colors from $5$ available to them: either they were not red before in which case they have their original color or $5$ available, or they were red in which case they don't see any colored vertices. By list 2-colorability of trees, we can choose a coloring on each finite component using Luzin--Novikov.\end{proof}

This makes $3$ the smallest known rank of a free group so that the measurable chromatic number of $F_n$ is less than $2n$.

We might try to modify the coloring further to get a 3-coloring of $T_4$. For instance, we try to resolve all conflicts whenever to cascades class. But this will cause a much larger number of secondary cascades. A careful observer will note that you only need to change colors on the smaller of the two cascades involved in a conflict, but it's unclear if this is enough to guarantee that few enough vertices are colored to salvage the analysis.

\section{Some open questions}

The most obvious open problem is:
\begin{prb}
    Does $F_2$ have a measurable $3$-coloring?
\end{prb} It is unclear whether the present method is enough to answer this question.

The proof given above gives a uniform local algorithm for coloring with uniform complexity $O\bigl(\log(\epsilon\inv)\bigr)$, and only yields proper colorings for the Bernoulli graphing of $T_n$. It seems that this is the best we can do using this simple greedy algorithm.

\begin{prb}\label{q:msrbl}
    Does every $6$-regular treeing admit a measurable $5$-coloring?
\end{prb}
\begin{prb}\label{q:algorithm}
    Is there a $o\bigl(\log(\epsilon\inv)\bigr)$ uniform algorithm for $5$-coloring $T_6$?
\end{prb}

The approximate versions of these questions are easily answered in the affirmative by standard results on weak containment and approximation by block factors. Of course, we could ask similar questions about $3$-coloring $T_4$ or $4$-coloring $T_6$, but these are even further out of reach. And by recent work connecting measurable combinatorics and distributed computing \cite{localtrees}, a negative answer to Problem \ref{q:msrbl} implies a negative answer to Problem \ref{q:algorithm}.

\end{document}